\newtheorem{theorem}{Theorem}
\theoremstyle{plain}
\newtheorem{corollary}{Corollary}
\newtheorem{definition}{Definition}
\newtheorem{lemma}{Lemma}
\newtheorem{proposition}{Proposition}
\newtheorem{remark}{Remark}
\numberwithin{equation}{section}
\begin{document}
\title[Some Hadamard like inequalities]{Some Hadamard like inequalities via
convex and $s$-convex functions and their applications for special means}
\author{Mevl\"{u}t TUN\c{C}}
\address{Kilis 7 Aral\i k University, Department of Mathematics, 79000,
Kilis, TURKEY}
\email{mevluttunc@kilis.edu.tr}
\urladdr{}
\date{}
\subjclass[2000]{Primary 26A15, 26D10; Secondary 26D07, 26D15}
\keywords{Hadamard' inequality, convexity, $s$-convexity}
\thanks{This paper is in final form and no version of it will be submitted
for publication elsewhere.}

\begin{abstract}
In this study, the author establish some inequalities of Hadamard like based
on convex and $s$-convexity in the second sense. Some applications to
special means of positive real numbers are also given.
\end{abstract}

\maketitle

\section{Preliminaries}

\subsection{Definitions}

\begin{definition}
\cite{mit2} A function $f:I\rightarrow 
\mathbb{R}
$ is said to be convex on $I$ if inequality%
\begin{equation}
f\left( tx+\left( 1-t\right) y\right) \leq tf\left( x\right) +\left(
1-t\right) f\left( y\right)  \label{101}
\end{equation}%
holds for all $x,y\in I$ and $t\in \left[ 0,1\right] $. We say that $f$ is
concave if $(-f)$ is convex.
\end{definition}

Geometrically, this means that if $P,Q$ and $R$ are three distinct points on
the graph of $f$ with $Q$ between $P$ and $R$, then $Q$ is on or below the
chord $PR$.

\begin{definition}
\cite{hud}\textit{\ Let }$s\in \left( 0,1\right] .$\textit{\ A function }$%
f:\left( 0,\infty \right] \rightarrow \left[ 0,\infty \right] $\textit{\ is
said to be }$s$-\textit{convex in the second sense if \ \ \ \ \ \ \ \ \ \ \
\ }%
\begin{equation}
f\left( tx+\left( 1-t\right) y\right) \leq t^{s}f\left( x\right) +\left(
1-t\right) ^{s}f\left( y\right) ,  \label{105}
\end{equation}%
\textit{for all }$x,y\in \left( 0,b\right] $\textit{\ \ and }$t\in \left[ 0,1%
\right] $\textit{. This class of }$s$\textit{-convex functions is usually
denoted by }$K_{s}^{2}$\textit{.}
\end{definition}

Certainly, $s$-convexity means just ordinary convexity when $s=1$.

\subsection{Theorems}

\begin{theorem}
\textbf{The Hermite-Hadamard inequality:} Let $f:I\subseteq 
\mathbb{R}
\rightarrow 
\mathbb{R}
$ be a convex function and $u,v\in I$ with $u<v$. The following double
inequality:%
\begin{equation}
f\left( \frac{u+v}{2}\right) \leq \frac{1}{v-u}\int_{u}^{v}f\left( x\right)
dx\leq \frac{f\left( u\right) +f\left( v\right) }{2}  \label{110}
\end{equation}%
is known in the literature as Hadamard's inequality (or Hermite-Hadamard
inequality) for convex functions. If $f$ is a positive concave function,
then the inequality is reversed.
\end{theorem}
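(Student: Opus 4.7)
The plan is to derive both inequalities directly from the defining convexity inequality (\ref{101}) by combining it with a suitable affine change of variable and then integrating in the parameter $t$. Throughout, the key identity is
\[
\frac{1}{v-u}\int_{u}^{v} f(x)\,dx = \int_{0}^{1} f\bigl(tu+(1-t)v\bigr)\,dt,
\]
obtained from the substitution $x = tu+(1-t)v$, which turns line integrals over $[u,v]$ into integrals over $[0,1]$ where convexity is most naturally applied.

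For the right-hand inequality, I would apply (\ref{101}) pointwise with $x=u$ and $y=v$ to get $f(tu+(1-t)v) \le tf(u)+(1-t)f(v)$, then integrate both sides with respect to $t\in[0,1]$. The right side evaluates to $\tfrac12 f(u)+\tfrac12 f(v)$, while the left side becomes $\tfrac{1}{v-u}\int_u^v f(x)\,dx$ after the change of variable above. This yields the upper bound.

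For the left-hand inequality, I would exploit the symmetric decomposition
\[
\frac{u+v}{2} = \frac{1}{2}\bigl(tu+(1-t)v\bigr) + \frac{1}{2}\bigl((1-t)u+tv\bigr),
\]
valid for every $t\in[0,1]$. Applying (\ref{101}) with weights $\tfrac12,\tfrac12$ to these two points gives
\[
f\!\left(\frac{u+v}{2}\right) \le \frac{1}{2} f\bigl(tu+(1-t)v\bigr) + \frac{1}{2} f\bigl((1-t)u+tv\bigr).
\]
Integrating over $t\in[0,1]$, both integrals on the right reduce to $\tfrac{1}{v-u}\int_u^v f(x)\,dx$ (the second one via the substitution $t\mapsto 1-t$), and the left side is constant, producing the lower bound.

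The concave case follows at once by applying the two inequalities just obtained to $-f$, which reverses each inequality. The only real subtlety is the symmetric averaging used for the lower bound; the rest is routine manipulation of the convexity inequality and an affine change of variable, so I expect no serious obstacle.
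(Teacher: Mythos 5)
Your proof is correct and is the standard derivation of the Hermite--Hadamard inequality; the paper itself states this as a known classical result and supplies no proof, so there is nothing to compare against. Both halves of your argument (integrating the pointwise convexity bound for the upper estimate, and the symmetric midpoint decomposition $\frac{u+v}{2}=\frac12\bigl(tu+(1-t)v\bigr)+\frac12\bigl((1-t)u+tv\bigr)$ for the lower estimate) are sound, and the passage to the concave case via $-f$ is routine.
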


\begin{theorem}
\cite{ssd6} \textit{Suppose that }$f:\left[ 0,\infty \right) \rightarrow %
\left[ 0,\infty \right) $\textit{\ is an }$s-$\textit{convex function in the
second sense, where }$s\in \left( 0,1\right] $\textit{, and let }$a,b\in %
\left[ 0,\infty \right) ,$ $a<b.$\textit{\ If }$f\in L_{1}\left( \left[ 0,1%
\right] \right) $\textit{, then the following inequalities hold:}%
\begin{equation}
2^{s-1}f\left( \frac{u+v}{2}\right) \leq \frac{1}{v-u}\int_{u}^{v}f\left(
x\right) dx\leq \frac{f\left( u\right) +f\left( v\right) }{s+1}.  \label{109}
\end{equation}%
The constant $k=\frac{1}{s+1}$ is the best possible in the second inequality
in (\ref{109}). The above inequalities are sharp. If $f$ is an $s$-concave
function in the second sense, then the inequality is reversed.
\end{theorem}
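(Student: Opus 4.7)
The plan is to deduce both inequalities directly from the defining inequality (\ref{105}) of $s$-convexity, mimicking the standard derivation of (\ref{110}) but with the exponent $s$ in place of $1$.

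I would first dispatch the right-hand inequality. The substitution $x=tu+(1-t)v$ transforms the integral into
\[
\frac{1}{v-u}\int_{u}^{v} f(x)\,dx = \int_{0}^{1} f\bigl(tu+(1-t)v\bigr)\,dt.
\]
Applying (\ref{105}) pointwise in $t$ bounds the integrand by $t^{s}f(u)+(1-t)^{s}f(v)$, and since $\int_{0}^{1} t^{s}\,dt = \int_{0}^{1}(1-t)^{s}\,dt = \frac{1}{s+1}$, integrating term by term yields the upper bound $\frac{f(u)+f(v)}{s+1}$.

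For the left-hand inequality my starting point is the identity
\[
\frac{u+v}{2} = \frac{1}{2}\bigl(tu+(1-t)v\bigr) + \frac{1}{2}\bigl((1-t)u+tv\bigr),
\]
valid for every $t\in[0,1]$. Applying (\ref{105}) at the midpoint $1/2$ to this convex combination gives
\[
f\!\left(\frac{u+v}{2}\right) \leq 2^{-s}\Bigl[f\bigl(tu+(1-t)v\bigr)+f\bigl((1-t)u+tv\bigr)\Bigr].
\]
Integrating in $t$ over $[0,1]$, the symmetry $t\leftrightarrow 1-t$ collapses the two integrals on the right into two copies of $\int_{0}^{1} f(tu+(1-t)v)\,dt$, which by the substitution above equals $\frac{1}{v-u}\int_{u}^{v}f(x)\,dx$. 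Multiplying through by $2^{s-1}$ then produces the claimed lower bound.

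The two inequalities are essentially mechanical once these substitutions are in place; the main delicate point is the sharpness claim and the assertion that $1/(s+1)$ is the best possible constant in the right-hand inequality. I would verify this separately by exhibiting an extremal family, e.g.\ $f(x)=x^{s}$ on $[0,\infty)$, for which both sides of each inequality can be computed explicitly and shown to be attained (or approached in the limit $u\to 0^{+}$), so that no smaller constant can replace $1/(s+1)$.
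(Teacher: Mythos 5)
Your two integral arguments are correct and are in fact the standard (original Dragomir--Fitzpatrick) proof of this result; note the paper itself only quotes the theorem from \cite{ssd6} without reproducing a proof, so there is no in-paper argument to diverge from. Both steps check out: the substitution $x=tu+(1-t)v$ together with $\int_{0}^{1}t^{s}\,dt=\int_{0}^{1}(1-t)^{s}\,dt=\frac{1}{s+1}$ gives the right-hand bound, and applying (\ref{105}) with parameter $\tfrac{1}{2}$ to $\frac{u+v}{2}=\tfrac{1}{2}\left(tu+(1-t)v\right)+\tfrac{1}{2}\left((1-t)u+tv\right)$ and integrating over $t$ gives the left-hand one. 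One caveat on the sharpness discussion: the witness $f(x)=x^{s}$ with $u=0$, $v=1$ does yield equality in the second inequality (both the mean value of $f$ and $\frac{f(0)+f(1)}{s+1}$ equal $\frac{1}{s+1}$), which settles that $k=\frac{1}{s+1}$ is best possible there; but the same witness gives $2^{s-1}f\left(\tfrac{1}{2}\right)=\tfrac{1}{2}<\frac{1}{s+1}$ whenever $s<1$, so it does not certify sharpness of the left-hand inequality, and that part of the stated claim would require a separate and more delicate justification.
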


For recent results and generalizations concerning Hadamard's inequality and
concepts of convexity and $s$-convexity see \cite{alo2}-\cite{yang} and the
references therein.

Throughout this paper we will use the following notations and conventions.
Let $J=\left[ 0,\infty \right) \subset 
\mathbb{R}
=\left( -\infty ,+\infty \right) ,$ and $u,v\in J$ with $0<u<v$ and $%
f^{\prime }\in L\left[ u,v\right] $ and 
\begin{eqnarray*}
A\left( u,v\right) &=&\frac{u+v}{2},\text{ }G\left( u,v\right) =\sqrt{uv},%
\text{ }I\left( u,v\right) =\frac{1}{e}\left( \frac{b^{b}}{a^{a}}\right) ^{%
\frac{1}{b-a}}, \\
L_{p}\left( u,v\right) &=&\left( \frac{v^{p+1}-u^{p+1}}{\left( p+1\right)
\left( v-u\right) }\right) ^{1/p},\text{ }u\neq v,\text{ }p\in 
\mathbb{R}
,\text{ }p\neq -1,0
\end{eqnarray*}%
be the arithmetic mean, geometric mean, identric mean, generalized
logarithmic mean for $u,v>0$ respectively.

\section{Some new Hadamard like inequalities}

In order to establish our main results, we first establish the following
lemma.

\begin{lemma}
\label{ll}Let $f:J\rightarrow 
\mathbb{R}
$ be a differentiable function on $J^{\circ }$. If $f^{\prime }\in L\left[
u,v\right] ,$ then%
\begin{eqnarray*}
&&\frac{\left( v-x\right) \left( vf\left( v\right) -uf\left( x\right)
\right) +\left( x-u\right) \left( vf\left( x\right) -uf\left( u\right)
\right) }{\left( v-u\right) ^{2}}-\frac{1}{v-u}\int_{u}^{v}f\left( \mu
\right) d\mu \\
&=&\frac{\left( v-x\right) ^{2}}{\left( v-u\right) ^{2}}\int_{0}^{1}\left(
tu+\left( 1-t\right) v\right) f^{\prime }\left( tx+\left( 1-t\right)
v\right) dt \\
&&+\frac{\left( x-u\right) ^{2}}{\left( v-u\right) ^{2}}\int_{0}^{1}\left(
tv+\left( 1-t\right) u\right) f^{\prime }\left( tx+\left( 1-t\right)
u\right) dt
\end{eqnarray*}%
for each $t\in \left[ 0,1\right] $ and $x\in \left[ u,v\right] .$
\end{lemma}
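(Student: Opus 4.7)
The plan is to evaluate each of the two integrals on the right-hand side by a single integration by parts, then glue the outputs together and check that everything collapses to the claimed identity. Denote
\[
I_{1}=\int_{0}^{1}\bigl(tu+(1-t)v\bigr)f^{\prime}\bigl(tx+(1-t)v\bigr)\,dt,\qquad
I_{2}=\int_{0}^{1}\bigl(tv+(1-t)u\bigr)f^{\prime}\bigl(tx+(1-t)u\bigr)\,dt.
\]
The assertion to prove is simply
\[
\frac{(v-x)^{2}}{(v-u)^{2}}\,I_{1}+\frac{(x-u)^{2}}{(v-u)^{2}}\,I_{2}
=\frac{(v-x)(vf(v)-uf(x))+(x-u)(vf(x)-uf(u))}{(v-u)^{2}}-\frac{1}{v-u}\int_{u}^{v}f(\mu)\,d\mu,
\]
so the whole argument is a direct computation; no use of convexity or of the inequalities quoted in the preliminaries is needed.

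First I would handle $I_{1}$: set $A(t)=tu+(1-t)v$ and $B(t)=f\bigl(tx+(1-t)v\bigr)/(x-v)$, so that $A^{\prime}(t)=u-v$ and $B^{\prime}(t)=f^{\prime}\bigl(tx+(1-t)v\bigr)$. Integration by parts yields the boundary contribution $\bigl(uf(x)-vf(v)\bigr)/(x-v)$ together with a leftover integral of $f$ itself. That leftover integral is transformed by the substitution $\mu=tx+(1-t)v$ (with $d\mu=(x-v)\,dt$) into $(v-u)/(x-v)^{2}\cdot\int_{v}^{x}f(\mu)\,d\mu$. After rewriting $(uf(x)-vf(v))/(x-v)$ as $(vf(v)-uf(x))/(v-x)$, multiplying through by $(v-x)^{2}/(v-u)^{2}$ gives
\[
\frac{(v-x)^{2}}{(v-u)^{2}}I_{1}=\frac{(v-x)(vf(v)-uf(x))}{(v-u)^{2}}+\frac{1}{v-u}\int_{v}^{x}f(\mu)\,d\mu.
\]

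The treatment of $I_{2}$ is entirely parallel, with $tv+(1-t)u$ in place of $tu+(1-t)v$ and the substitution $\mu=tx+(1-t)u$. That computation produces
\[
\frac{(x-u)^{2}}{(v-u)^{2}}I_{2}=\frac{(x-u)(vf(x)-uf(u))}{(v-u)^{2}}-\frac{1}{v-u}\int_{u}^{x}f(\mu)\,d\mu.
\]
Adding the two displays, the two tail integrals combine via $\int_{v}^{x}f-\int_{u}^{x}f=-\int_{u}^{v}f$, which is exactly the $-\frac{1}{v-u}\int_{u}^{v}f$ required, and the boundary terms already match the stated right-hand side.

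The only real obstacle is sign and orientation bookkeeping: the factors $(x-v)$ and $(v-x)$ must be tracked carefully when inverting $B(t)$ and when performing the change of variables, and one needs $x\in(u,v)$ strictly so that no antiderivative is divided by zero (the boundary cases $x=u$ and $x=v$ are degenerate but straightforward). Apart from this, the proof is a mechanical integration-by-parts followed by a cancellation of the two partial integrals of $f$ into a single integral over $[u,v]$.
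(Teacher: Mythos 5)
Your proof is correct and is essentially identical to the paper's: integrate each of the two integrals by parts using the antiderivative $f(tx+(1-t)v)/(x-v)$ (resp.\ $f(tx+(1-t)u)/(x-u)$), convert the leftover integrals by the substitution $\mu=tx+(1-t)v$ (resp.\ $\mu=tx+(1-t)u$), and combine the two partial integrals of $f$ into $-\frac{1}{v-u}\int_{u}^{v}f(\mu)\,d\mu$. All the sign bookkeeping in your two intermediate displays checks out, and your remark that the cases $x=u$ and $x=v$ need separate (trivial) treatment is a point the paper glosses over.
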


\begin{proof}
Integrating by parts, we get%
\begin{eqnarray*}
&&\frac{\left( v-x\right) ^{2}}{\left( v-u\right) ^{2}}\int_{0}^{1}\left(
tu+\left( 1-t\right) v\right) f^{\prime }\left( tx+\left( 1-t\right)
v\right) dt \\
&&+\frac{\left( x-u\right) ^{2}}{\left( v-u\right) ^{2}}\int_{0}^{1}\left(
tv+\left( 1-t\right) u\right) f^{\prime }\left( tx+\left( 1-t\right)
u\right) dt \\
&=&\frac{\left( v-x\right) ^{2}}{\left( v-u\right) ^{2}}\left[ \left. \left(
tu+\left( 1-t\right) v\right) \frac{f\left( tx+\left( 1-t\right) v\right) }{%
x-v}\right\vert _{0}^{1}-\left( u-v\right) \int_{0}^{1}\frac{f\left(
tx+\left( 1-t\right) v\right) }{x-v}dt\right] \\
&&+\frac{\left( x-u\right) ^{2}}{\left( v-u\right) ^{2}}\left[ \left. \left(
tv+\left( 1-t\right) u\right) \frac{f\left( tx+\left( 1-t\right) u\right) }{%
x-u}\right\vert _{0}^{1}-\left( v-u\right) \int_{0}^{1}\frac{f\left(
tx+\left( 1-t\right) u\right) }{x-u}dt\right] \\
&=&\frac{\left( v-x\right) ^{2}}{\left( v-u\right) ^{2}}\left[ \frac{%
uf\left( x\right) -vf\left( v\right) }{x-v}-\frac{v-u}{\left( x-v\right) ^{2}%
}\int_{x}^{v}f\left( \mu \right) d\mu \right] \\
&&+\frac{\left( x-a\right) ^{2}}{\left( v-u\right) ^{2}}\left[ \frac{%
vf\left( x\right) -uf\left( u\right) }{x-u}-\frac{v-u}{\left( x-u\right) ^{2}%
}\int_{u}^{x}f\left( \mu \right) d\mu \right] \\
&=&\frac{\left( v-x\right) \left( vf\left( v\right) -uf\left( x\right)
\right) +\left( x-u\right) \left( vf\left( x\right) -uf\left( u\right)
\right) }{\left( v-u\right) ^{2}}-\frac{1}{v-u}\int_{u}^{v}f\left( \mu
\right) d\mu .
\end{eqnarray*}
\end{proof}

\begin{theorem}
\label{t1}Let $f:J\rightarrow 
\mathbb{R}
$ be a differentiable function on $J^{\circ }.$ If $\left\vert f^{\prime
}\right\vert $\ is convex on $\left[ u,v\right] ,$ then%
\begin{eqnarray*}
&&\left\vert \frac{\left( v-x\right) \left( vf\left( v\right) -uf\left(
x\right) \right) +\left( x-u\right) \left( vf\left( x\right) -uf\left(
u\right) \right) }{\left( v-u\right) ^{2}}-\frac{1}{v-u}\int_{u}^{v}f\left(
\mu \right) d\mu \right\vert \\
&\leq &\frac{\left( v-x\right) ^{2}}{6\left( v-u\right) ^{2}}\left[ \left(
2u+v\right) \left\vert f^{\prime }\left( x\right) \right\vert +\left(
u+2v\right) \left\vert f^{\prime }\left( v\right) \right\vert \right] +\frac{%
\left( x-u\right) ^{2}}{6\left( v-u\right) ^{2}}\left[ \left( u+2v\right)
\left\vert f^{\prime }\left( x\right) \right\vert +\left( 2u+v\right)
\left\vert f^{\prime }\left( u\right) \right\vert \right]
\end{eqnarray*}%
for each $x\in \left[ u,v\right] .$
\end{theorem}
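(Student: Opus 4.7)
The plan is to start from the integral identity provided by Lemma \ref{ll}, apply the triangle inequality to the right-hand side, and then use the hypothesis that $|f'|$ is convex on $[u,v]$ to bound each integrand. Concretely, taking absolute values of the identity yields
\[
\left\vert \cdots \right\vert \leq \frac{(v-x)^{2}}{(v-u)^{2}}\int_{0}^{1}\bigl(tu+(1-t)v\bigr)\,\bigl|f'(tx+(1-t)v)\bigr|\,dt + \frac{(x-u)^{2}}{(v-u)^{2}}\int_{0}^{1}\bigl(tv+(1-t)u\bigr)\,\bigl|f'(tx+(1-t)u)\bigr|\,dt,
\]
where the factors $(tu+(1-t)v)$ and $(tv+(1-t)u)$ are nonnegative for $t\in[0,1]$, so no sign issues arise when pushing $|\cdot|$ through the product.

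Next I would apply convexity of $|f'|$ to each composite term, namely $|f'(tx+(1-t)v)| \leq t|f'(x)| + (1-t)|f'(v)|$ and analogously for the second integral with $u$ in place of $v$. This reduces the problem to evaluating integrals of the form $\int_{0}^{1} t^{2}\,dt$, $\int_{0}^{1} t(1-t)\,dt$, and $\int_{0}^{1}(1-t)^{2}\,dt$, which equal $1/3$, $1/6$, and $1/3$ respectively. A short calculation then gives
\[
\int_{0}^{1}\bigl(tu+(1-t)v\bigr)\bigl(t|f'(x)|+(1-t)|f'(v)|\bigr)\,dt = \frac{(2u+v)|f'(x)| + (u+2v)|f'(v)|}{6},
\]
and swapping the roles of $u$ and $v$ yields the analogous formula for the second integral.

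Assembling the two pieces produces exactly the claimed bound. I do not expect any real obstacle: the only thing to be careful with is the nonnegativity of the weight factors $tu+(1-t)v$ and $tv+(1-t)u$ on $[0,1]$ (which follows from $0<u<v$), so that splitting the absolute value over the product is legitimate, and keeping track of the symmetric roles of $u$ and $v$ in the two terms so that the coefficients $2u+v$ and $u+2v$ end up attached to the correct $|f'|$ values. The remainder is routine integration.
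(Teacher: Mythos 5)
Your proposal is correct and follows exactly the paper's argument: apply Lemma \ref{ll}, use the triangle inequality together with the nonnegativity of the weights $tu+(1-t)v$ and $tv+(1-t)u$, bound $|f'|$ at the convex combinations by convexity, and evaluate the resulting elementary integrals ($\int_0^1 t^2\,dt = \int_0^1 (1-t)^2\,dt = 1/3$, $\int_0^1 t(1-t)\,dt = 1/6$) to obtain the stated coefficients. No gaps; the computation checks out.
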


\begin{proof}
Using Lemma \ref{ll} and from properties of modulus, and since $\left\vert
f^{\prime }\right\vert $\ is convex on $\left[ u,v\right] ,$ then we obtain%
\begin{eqnarray*}
&&\left\vert \frac{\left( v-x\right) \left( vf\left( v\right) -uf\left(
x\right) \right) +\left( x-u\right) \left( vf\left( x\right) -uf\left(
u\right) \right) }{\left( v-u\right) ^{2}}-\frac{1}{v-u}\int_{u}^{v}f\left(
\mu \right) d\mu \right\vert \\
&\leq &\frac{\left( v-x\right) ^{2}}{\left( v-u\right) ^{2}}%
\int_{0}^{1}\left( tu+\left( 1-t\right) v\right) \left\vert f^{\prime
}\left( tx+\left( 1-t\right) v\right) \right\vert dt \\
&&+\frac{\left( x-u\right) ^{2}}{\left( v-u\right) ^{2}}\int_{0}^{1}\left(
tv+\left( 1-t\right) u\right) \left\vert f^{\prime }\left( tx+\left(
1-t\right) u\right) \right\vert dt \\
&\leq &\frac{\left( v-x\right) ^{2}}{\left( v-u\right) ^{2}}%
\int_{0}^{1}\left( tu+\left( 1-t\right) v\right) \left( t\left\vert
f^{\prime }\left( x\right) \right\vert +\left( 1-t\right) \left\vert
f^{\prime }\left( v\right) \right\vert \right) dt \\
&&+\frac{\left( x-u\right) ^{2}}{\left( v-u\right) ^{2}}\int_{0}^{1}\left(
tv+\left( 1-t\right) u\right) \left( t\left\vert f^{\prime }\left( x\right)
\right\vert +\left( 1-t\right) \left\vert f^{\prime }\left( u\right)
\right\vert \right) dt \\
&=&\frac{\left( v-x\right) ^{2}}{6\left( v-u\right) ^{2}}\left[ \left(
2u+v\right) \left\vert f^{\prime }\left( x\right) \right\vert +\left(
u+2v\right) \left\vert f^{\prime }\left( v\right) \right\vert \right] \\
&&+\frac{\left( x-u\right) ^{2}}{6\left( v-u\right) ^{2}}\left[ \left(
u+2v\right) \left\vert f^{\prime }\left( x\right) \right\vert +\left(
2u+v\right) \left\vert f^{\prime }\left( u\right) \right\vert \right] .
\end{eqnarray*}%
The proof is completed.
\end{proof}

\begin{theorem}
\label{t2} Let $f:J\rightarrow 
\mathbb{R}
$ be a differentiable function on $J^{\circ }.$ If $\left\vert f^{\prime
}\right\vert $\ is $s$-convex on $\left[ u,v\right] $ for some fixed $s\in
\left( 0,1\right] ,$ then%
\begin{eqnarray*}
&&\left\vert \frac{\left( v-x\right) \left( vf\left( v\right) -uf\left(
x\right) \right) +\left( x-u\right) \left( vf\left( x\right) -uf\left(
u\right) \right) }{\left( v-u\right) ^{2}}-\frac{1}{v-u}\int_{u}^{v}f\left(
\mu \right) d\mu \right\vert \\
&\leq &\frac{\left( v-x\right) ^{2}}{\left( v-u\right) ^{2}}\left[ \frac{%
\left( \left( s+1\right) u+v\right) \left\vert f^{\prime }\left( x\right)
\right\vert +\left( u+\left( s+1\right) v\right) \left\vert f^{\prime
}\left( v\right) \right\vert }{\left( s+1\right) \left( s+2\right) }\right]
\\
&&+\frac{\left( x-u\right) ^{2}}{\left( v-u\right) ^{2}}\left[ \frac{\left(
u+\left( s+1\right) v\right) \left\vert f^{\prime }\left( x\right)
\right\vert +\left( \left( s+1\right) v+v\right) \left\vert f^{\prime
}\left( u\right) \right\vert }{\left( s+1\right) \left( s+2\right) }\right]
\end{eqnarray*}%
for each $x\in \left[ u,v\right] .$
\end{theorem}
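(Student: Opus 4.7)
The plan is to mirror the proof of Theorem \ref{t1}, replacing the ordinary convexity estimate on $|f'|$ with the $s$-convexity inequality (\ref{105}) and recomputing the resulting elementary integrals. Concretely, I would start from the identity in Lemma \ref{ll}, pass the absolute value inside the two integrals on the right-hand side, and then use the hypothesis $|f'(tx+(1-t)v)|\le t^{s}|f'(x)|+(1-t)^{s}|f'(v)|$ on the first integral and $|f'(tx+(1-t)u)|\le t^{s}|f'(x)|+(1-t)^{s}|f'(u)|$ on the second.

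After this substitution the problem reduces to evaluating four basic $\beta$-type integrals on $[0,1]$, namely $\int_{0}^{1}t^{s+1}\,dt=\tfrac{1}{s+2}$, $\int_{0}^{1}(1-t)^{s+1}\,dt=\tfrac{1}{s+2}$, and the two mixed integrals $\int_{0}^{1}t(1-t)^{s}\,dt=\int_{0}^{1}(1-t)t^{s}\,dt=\tfrac{1}{(s+1)(s+2)}$. Grouping the contributions coming from the linear weight $tu+(1-t)v$ gives
\begin{equation*}
\int_{0}^{1}\bigl(tu+(1-t)v\bigr)\bigl(t^{s}|f'(x)|+(1-t)^{s}|f'(v)|\bigr)dt=\frac{((s+1)u+v)|f'(x)|+(u+(s+1)v)|f'(v)|}{(s+1)(s+2)},
\end{equation*}
and symmetrically for the second integral with $v$ and $u$ interchanged in the weight and $|f'(u)|$ in place of $|f'(v)|$. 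Multiplying by the prefactors $(v-x)^{2}/(v-u)^{2}$ and $(x-u)^{2}/(v-u)^{2}$ and adding yields the claimed upper bound.

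There is no real obstacle here: the structural step (Lemma \ref{ll} plus the triangle inequality) is identical to Theorem \ref{t1}, so the only thing to be careful about is the bookkeeping of the four Beta-type integrals and verifying that at $s=1$ the constants $1/((s+1)(s+2))=1/6$ and $(s+1)/((s+1)(s+2))=1/3$ recover exactly the factors appearing in Theorem \ref{t1}, which serves as a useful consistency check on the computation.
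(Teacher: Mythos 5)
Your proposal is correct and is exactly the argument the paper intends: the printed proof of Theorem \ref{t2} just says it is similar to Theorem \ref{t1}, and your route (Lemma \ref{ll}, triangle inequality, the $s$-convexity bound on $\left\vert f^{\prime }\right\vert$, and the four Beta-type integrals $\int_{0}^{1}t^{s+1}dt=\int_{0}^{1}(1-t)^{s+1}dt=\tfrac{1}{s+2}$ and $\int_{0}^{1}t(1-t)^{s}dt=\int_{0}^{1}t^{s}(1-t)dt=\tfrac{1}{(s+1)(s+2)}$) is precisely that argument, with the $s=1$ consistency check confirming the constants. Your computation also shows that the coefficient $\left( \left( s+1\right) v+v\right)$ of $\left\vert f^{\prime }\left( u\right) \right\vert$ in the stated theorem is a typo for $\left( \left( s+1\right) u+v\right)$, which is what the integral actually yields and what reduces to $2u+v$ at $s=1$.
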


\begin{proof}
The proof of Theorem \ref{t2} is similar to Theorem \ref{t1}.
\end{proof}

\begin{remark}
In Theorem \ref{t1}, if we take $s=1$, then Theorem \ref{t2} reduces to
Theorem \ref{t1}.
\end{remark}

\begin{corollary}
\label{c1} In Theorem \ref{t2}, if we choose $x=\frac{u+v}{2},$ we get%
\begin{eqnarray*}
&&\left\vert \frac{vf\left( v\right) -uf\left( u\right) }{2\left( v-u\right) 
}+\frac{1}{2}f\left( \frac{u+v}{2}\right) -\frac{1}{v-u}\int_{u}^{v}f\left(
\mu \right) d\mu \right\vert \\
&\leq &\frac{1}{4}\left[ \frac{\left( \left( s+1\right) u+v\right)
\left\vert f^{\prime }\left( \frac{u+v}{2}\right) \right\vert +\left(
u+\left( s+1\right) v\right) \left\vert f^{\prime }\left( v\right)
\right\vert }{\left( s+1\right) \left( s+2\right) }\right] \\
&&+\frac{1}{4}\left[ \frac{\left( u+\left( s+1\right) v\right) \left\vert
f^{\prime }\left( \frac{u+v}{2}\right) \right\vert +\left( \left( s+1\right)
u+v\right) \left\vert f^{\prime }\left( u\right) \right\vert }{\left(
s+1\right) \left( s+2\right) }\right]
\end{eqnarray*}
\end{corollary}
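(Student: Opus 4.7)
The plan is to obtain Corollary \ref{c1} as a direct specialization of Theorem \ref{t2} by setting $x=\frac{u+v}{2}$. Under this choice one has $v-x=x-u=\frac{v-u}{2}$, so the geometric weights in front of the two bracketed terms collapse to
$$\frac{(v-x)^{2}}{(v-u)^{2}}=\frac{(x-u)^{2}}{(v-u)^{2}}=\frac{1}{4}.$$
This immediately explains the factor $\tfrac{1}{4}$ in front of each bracket on the right-hand side of the corollary, while the argument $|f'(x)|$ becomes $|f'(\tfrac{u+v}{2})|$ and $|f'(u)|, |f'(v)|$ remain unchanged.

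The main bookkeeping step is to simplify the left-hand side of Theorem \ref{t2} under this substitution. I would factor the common $\frac{v-u}{2}$ out of the numerator,
$$(v-x)\bigl(vf(v)-uf(x)\bigr)+(x-u)\bigl(vf(x)-uf(u)\bigr)=\tfrac{v-u}{2}\bigl[vf(v)-uf(u)+(v-u)f(x)\bigr],$$
and then divide by $(v-u)^{2}$. Splitting the resulting fraction yields
$$\frac{vf(v)-uf(u)}{2(v-u)}+\tfrac{1}{2}f\!\left(\tfrac{u+v}{2}\right),$$
which is precisely the combination appearing inside the absolute value in the corollary. The integral term $\frac{1}{v-u}\int_{u}^{v}f(\mu)\,d\mu$ is untouched by the substitution, so it simply carries through.

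Since every step is a routine algebraic manipulation once $x$ is fixed at the midpoint, there is no genuine obstacle; the only care needed is tracking which $|f'|$ is evaluated at $u$, at $v$, and at the midpoint when collecting the two brackets from Theorem \ref{t2}. After these substitutions are combined, the upper bound of Theorem \ref{t2} reproduces exactly the bound asserted in Corollary \ref{c1}, completing the derivation.
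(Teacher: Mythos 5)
Your derivation is correct and is exactly the (implicit) route the paper takes: setting $x=\frac{u+v}{2}$ makes both weights $\frac{(v-x)^{2}}{(v-u)^{2}}=\frac{(x-u)^{2}}{(v-u)^{2}}=\frac{1}{4}$, and factoring $\frac{v-u}{2}$ out of the numerator collapses the left-hand side to $\frac{vf(v)-uf(u)}{2(v-u)}+\frac{1}{2}f\left(\frac{u+v}{2}\right)$ as you show. The only point worth flagging is that the coefficient $\left(\left(s+1\right)v+v\right)$ multiplying $\left\vert f^{\prime}\left(u\right)\right\vert$ in the statement of Theorem \ref{t2} is evidently a misprint for $\left(\left(s+1\right)u+v\right)$ (as symmetry and the proof of Theorem \ref{t1} with $s=1$ confirm), and your reading tacitly adopts this correction, in agreement with the corollary as stated.
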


\begin{proposition}
Let $u,v\in J^{\circ },\ 0<u<v$ and $s\in \left( 0,1\right] ,$ then%
\begin{eqnarray}
&&\left\vert \frac{s+1}{2}L_{s}^{s}\left( u,v\right) +\frac{1}{2}A^{s}\left(
u,v\right) -L_{s}^{s}\left( u,v\right) \right\vert  \notag \\
&=&\left\vert \left( s-1\right) L_{s}^{s}\left( u,v\right) +A^{s}\left(
u,v\right) \right\vert  \label{q} \\
&\leq &\frac{s}{s+1}A\left( u,v\right) +\frac{s}{s+2}A\left(
u^{s},v^{s}\right) +\frac{sG^{2}\left( u,v\right) }{\left( s+1\right) \left(
s+2\right) }A\left( u^{s-2},v^{s-2}\right)  \notag
\end{eqnarray}
\end{proposition}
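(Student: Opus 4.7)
The plan is to obtain this proposition as a direct application of Corollary \ref{c1} to the test function $f(x)=x^{s}$ on the interval $[u,v]\subset(0,\infty)$. Since $f'(x)=sx^{s-1}$, one has $|f'(x)|=sx^{s-1}$ for $x>0$, so the first task is to check that $|f'|$ is $s$-convex on $[u,v]$ so that the hypotheses of Corollary \ref{c1} are satisfied.

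For $s\in(0,1]$ the function $x\mapsto x^{s-1}$ is nonnegative and (since its second derivative $(s-1)(s-2)x^{s-3}$ is nonnegative on $(0,\infty)$) is convex on $[u,v]$. A one-line observation then handles $s$-convexity: if $g\geq 0$ is convex and $s\in(0,1]$, then because $t^{s}\geq t$ and $(1-t)^{s}\geq 1-t$ on $[0,1]$, ordinary convexity implies
\[
g(tx+(1-t)y)\leq tg(x)+(1-t)g(y)\leq t^{s}g(x)+(1-t)^{s}g(y),
\]
so $g\in K_{s}^{2}$. Applied to $g(x)=sx^{s-1}$, this gives the required $s$-convexity of $|f'|$.

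Next I would evaluate the left-hand side of Corollary \ref{c1} at $f(x)=x^{s}$. Using $vf(v)-uf(u)=v^{s+1}-u^{s+1}$ together with the identity $\frac{v^{s+1}-u^{s+1}}{(s+1)(v-u)}=L_{s}^{s}(u,v)$, one sees that
\[
\frac{vf(v)-uf(u)}{2(v-u)}=\frac{s+1}{2}L_{s}^{s}(u,v),\qquad \tfrac{1}{2}f\!\left(\tfrac{u+v}{2}\right)=\tfrac{1}{2}A^{s}(u,v),\qquad \frac{1}{v-u}\int_{u}^{v}\mu^{s}\,d\mu=L_{s}^{s}(u,v),
\]
which matches the expression on the left of \eqref{q} after collecting $L_{s}^{s}(u,v)$ terms.

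For the right-hand side, I would substitute $|f'(u)|=su^{s-1}$, $|f'(v)|=sv^{s-1}$, and $|f'(\tfrac{u+v}{2})|=sA^{s-1}(u,v)$ into the bound of Corollary \ref{c1}. After grouping the coefficients of $|f'(\tfrac{u+v}{2})|$, the factor $(s+1)u+v+u+(s+1)v=(s+2)(u+v)$ telescopes the denominator $(s+1)(s+2)$ to $(s+1)$, producing the term in $A(u,v)$. The remaining piece involves $(s+1)(u^{s}+v^{s})+uv^{s-1}+vu^{s-1}$, and the crucial algebraic step is rewriting
\[
u^{s}+v^{s}=2A(u^{s},v^{s}),\qquad uv^{s-1}+vu^{s-1}=uv\bigl(u^{s-2}+v^{s-2}\bigr)=2G^{2}(u,v)A(u^{s-2},v^{s-2}),
\]
which converts the bound into the three-term expression appearing in \eqref{q}. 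The main obstacle I anticipate is purely bookkeeping: keeping the coefficients straight through the substitution and recognising the last product $uv^{s-1}+vu^{s-1}$ as a $G^{2}\cdot A$ combination — the $s$-convexity check and the mean identities are routine once set up.
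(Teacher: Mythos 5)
Your route is the same as the paper's: the paper's entire proof is ``apply Corollary \ref{c1} to $f(x)=x^{s}$,'' and you do exactly that, with the welcome extra step of verifying that $|f'(x)|=sx^{s-1}$ is $s$-convex in the second sense (the paper only asserts that $f$ itself is $s$-convex, which is not what Corollary \ref{c1} requires, so your observation that a nonnegative convex function lies in $K_{s}^{2}$ because $t\le t^{s}$ on $[0,1]$ genuinely fills a gap). However, your final bookkeeping claim is not accurate as stated. The left-hand side of Corollary \ref{c1} for $f(x)=x^{s}$ is $\bigl|\tfrac{s-1}{2}L_{s}^{s}(u,v)+\tfrac12 A^{s}(u,v)\bigr|=\tfrac12\bigl|(s-1)L_{s}^{s}(u,v)+A^{s}(u,v)\bigr|$, which is \emph{half} of the second displayed line of \eqref{q} (the printed equality in \eqref{q} is itself off by a factor of $2$). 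Correspondingly, the right-hand side you compute comes out as
\begin{equation*}
\frac{s}{2(s+1)}A^{s}(u,v)+\frac{s}{2(s+2)}A\left(u^{s},v^{s}\right)+\frac{sG^{2}(u,v)}{2(s+1)(s+2)}A\left(u^{s-2},v^{s-2}\right),
\end{equation*}
i.e.\ half the printed coefficients; and note that the collected $\bigl|f'\bigl(\tfrac{u+v}{2}\bigr)\bigr|$ term is $\tfrac{s}{2(s+1)}A^{s}(u,v)$, not a term in $A(u,v)$ to the first power, since the coefficient $(s+2)(u+v)=2(s+2)A(u,v)$ multiplies $sA^{s-1}(u,v)$. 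So your derivation does not ``convert the bound into the three-term expression appearing in \eqref{q}'' literally: it yields the inequality for $\tfrac12|(s-1)L_{s}^{s}+A^{s}|$ with halved coefficients and with $A^{s}(u,v)$ where the paper prints $A(u,v)$. You should either state and prove the corrected version or explicitly flag the two discrepancies; as written, your last paragraph asserts a match that the computation does not deliver.
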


\begin{proof}
The proof follows from Corollary \ref{c1} applied\ to the $s$-convex
function $f\left( x\right) =x^{s}.$ Equality in (\ref{q}) holds if and only
if $s=1.$
\end{proof}

\begin{corollary}
\label{c2} In Corollary \ref{c1}, if we take $\left\vert f^{\prime
}\right\vert \leq M,$ we get%
\begin{equation*}
\left\vert \frac{vf\left( v\right) -uf\left( u\right) }{2\left( v-u\right) }+%
\frac{1}{2}f\left( \frac{u+v}{2}\right) -\frac{1}{v-u}\int_{u}^{v}f\left(
\mu \right) d\mu \right\vert \leq M\frac{\left( u+v\right) }{2\left(
s+1\right) }
\end{equation*}
\end{corollary}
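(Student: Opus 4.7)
The plan is to deduce Corollary \ref{c2} directly from Corollary \ref{c1} by replacing every occurrence of $|f'(\cdot)|$ on the right-hand side with the uniform bound $M$, and then simplifying the resulting expression. No new integration or convexity argument is needed at this stage; the entire work has already been absorbed into Corollary \ref{c1}.

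More precisely, I would first write out the right-hand side of Corollary \ref{c1} explicitly as
\begin{equation*}
\frac{1}{4(s+1)(s+2)}\Bigl[\bigl((s{+}1)u+v\bigr)|f'(\tfrac{u+v}{2})|+\bigl(u+(s{+}1)v\bigr)|f'(v)|+\bigl(u+(s{+}1)v\bigr)|f'(\tfrac{u+v}{2})|+\bigl((s{+}1)u+v\bigr)|f'(u)|\Bigr],
\end{equation*}
(correcting the obvious typo $(s+1)v+v$ in the stated corollary to $(s+1)u+v$, which is the coefficient that actually appears after computing the defining integral in the proof of Theorem \ref{t2}). Applying $|f'|\le M$ termwise gives an upper bound of $\dfrac{M}{4(s+1)(s+2)}$ times the sum of the four bracketed coefficients.

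The main step is then the algebraic simplification: group the coefficients as
\begin{equation*}
2\bigl((s{+}1)u+v\bigr)+2\bigl(u+(s{+}1)v\bigr)=2\bigl[(s{+}2)u+(s{+}2)v\bigr]=2(s+2)(u+v),
\end{equation*}
so that the $(s+2)$ factor cancels with the corresponding factor in the denominator and leaves
\begin{equation*}
\frac{M\cdot 2(s+2)(u+v)}{4(s+1)(s+2)}=\frac{M(u+v)}{2(s+1)},
\end{equation*}
which is the claimed bound.

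There is no genuine obstacle here; the only care required is in carrying the coefficients through correctly and in noticing the typo in Corollary \ref{c1} so that the cancellation of $(s+2)$ actually occurs. I would present the proof as a single display showing the termwise bound followed by the one-line simplification above.
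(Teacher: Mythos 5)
Your derivation is correct and is precisely the computation the paper intends (the paper states this corollary without proof): bounding each $\left\vert f^{\prime}(\cdot)\right\vert$ by $M$ and summing the four coefficients to $2(s+2)(u+v)$ cancels the $(s+2)$ and gives $M(u+v)/(2(s+1))$. One small remark: the typo $\left((s+1)v+v\right)$ occurs in Theorem \ref{t2}, not in Corollary \ref{c1}, where the coefficient of $\left\vert f^{\prime}(u)\right\vert$ is already printed correctly as $\left((s+1)u+v\right)$, so no correction is actually needed at the stage you start from.
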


\begin{theorem}
\label{t3}Let $f:J\rightarrow 
\mathbb{R}
$ be a differentiable function on $J^{\circ }.$ If $\left\vert f^{\prime
}\right\vert ^{q}$\ is convex on $\left[ u,v\right] $ and $q>1$ with $%
1/p+1/q=1$, then%
\begin{eqnarray*}
&&\left\vert \frac{\left( v-x\right) \left( vf\left( v\right) -uf\left(
x\right) \right) +\left( x-u\right) \left( vf\left( x\right) -uf\left(
u\right) \right) }{\left( v-u\right) ^{2}}-\frac{1}{v-u}\int_{u}^{v}f\left(
\mu \right) d\mu \right\vert \\
&\leq &\frac{\left( v-x\right) ^{2}}{\left( v-u\right) ^{2}}L_{p}\left(
u,v\right) \left( \frac{\left\vert f^{\prime }\left( x\right) \right\vert
^{q}+\left\vert f^{\prime }\left( v\right) \right\vert ^{q}}{2}\right) ^{%
\frac{1}{q}}+\frac{\left( x-u\right) ^{2}}{\left( v-u\right) ^{2}}%
L_{p}\left( u,v\right) \left( \frac{\left\vert f^{\prime }\left( x\right)
\right\vert ^{q}+\left\vert f^{\prime }\left( u\right) \right\vert ^{q}}{2}%
\right) ^{\frac{1}{q}}.
\end{eqnarray*}%
for each $x\in \left[ u,v\right] .$
\end{theorem}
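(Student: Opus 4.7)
The plan is to argue exactly as in the proof of Theorem \ref{t1}, but with H\"older's inequality replacing the direct ``bound-the-weight-times-convexity'' step. Start from the identity in Lemma \ref{ll}, take absolute values, and use the triangle inequality to peel the two integrals apart. For each of them the integrand is a product of a linear weight in $t$ and $|f'(\cdot)|$, so H\"older with exponents $p,q$ is the natural tool:
\begin{equation*}
\int_0^1 (tu+(1-t)v)\,|f'(tx+(1-t)v)|\,dt \le \left(\int_0^1 (tu+(1-t)v)^{p}\,dt\right)^{1/p}\!\left(\int_0^1 |f'(tx+(1-t)v)|^{q}\,dt\right)^{1/q},
\end{equation*}
and similarly for the companion integral with weight $tv+(1-t)u$.

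Next I would evaluate the weight integrals. A direct substitution $s=tu+(1-t)v$ gives
\begin{equation*}
\int_0^1 (tu+(1-t)v)^{p}\,dt = \frac{v^{p+1}-u^{p+1}}{(p+1)(v-u)} = L_p^{p}(u,v),
\end{equation*}
and the same value arises for $\int_0^1 (tv+(1-t)u)^{p}\,dt$ by symmetry of the substitution. This is the key identification that produces the $L_p(u,v)$ factor in the statement.

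For the second factor I would invoke convexity of $|f'|^{q}$ on $[u,v]$ (noting that $tx+(1-t)v$ and $tx+(1-t)u$ both stay in $[u,v]$ for $x\in[u,v]$ and $t\in[0,1]$):
\begin{equation*}
|f'(tx+(1-t)v)|^{q}\le t|f'(x)|^{q}+(1-t)|f'(v)|^{q},
\end{equation*}
and integrating over $t\in[0,1]$ produces $(|f'(x)|^{q}+|f'(v)|^{q})/2$; the analogous bound with $v$ replaced by $u$ handles the other term.

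Finally I would multiply the two resulting estimates by $(v-x)^{2}/(v-u)^{2}$ and $(x-u)^{2}/(v-u)^{2}$ respectively and add. No step is genuinely difficult; the only point worth checking carefully is the weight computation identifying $L_p(u,v)$, since the symmetry between the two integrals has to hold so that the same $L_p$ factor appears in both summands of the final bound.
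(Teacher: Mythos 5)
Your proposal is correct and follows the same route as the paper: apply the triangle inequality to the identity of Lemma \ref{ll}, use H\"older's inequality with exponents $p,q$ on each of the two weighted integrals, identify $\int_0^1 (tu+(1-t)v)^p\,dt=\int_0^1(tv+(1-t)u)^p\,dt=L_p^p(u,v)$, and bound $\int_0^1|f'(tx+(1-t)\cdot)|^q\,dt$ by the arithmetic mean of the endpoint values using convexity of $|f'|^q$. The only cosmetic difference is that you integrate the pointwise convexity inequality directly, whereas the paper cites the right-hand Hermite--Hadamard inequality for that step; these are the same computation.
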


\begin{proof}
Using Lemma \ref{ll} and using the well-known H\"{o}lder's inequality and
since $\left\vert f^{\prime }\right\vert ^{q}$\ is convex on $\left[ u,v%
\right] ,$ we establish%
\begin{eqnarray}
&&  \label{2} \\
&&\left\vert \frac{\left( v-x\right) \left( vf\left( v\right) -uf\left(
x\right) \right) +\left( x-u\right) \left( vf\left( x\right) -uf\left(
u\right) \right) }{\left( v-u\right) ^{2}}-\frac{1}{v-u}\int_{u}^{v}f\left(
\mu \right) d\mu \right\vert  \notag \\
&\leq &\frac{\left( v-x\right) ^{2}}{\left( v-u\right) ^{2}}%
\int_{0}^{1}\left( tu+\left( 1-t\right) v\right) \left\vert f^{\prime
}\left( tx+\left( 1-t\right) v\right) \right\vert dt  \notag \\
&&+\frac{\left( x-u\right) ^{2}}{\left( v-u\right) ^{2}}\int_{0}^{1}\left(
tv+\left( 1-t\right) u\right) \left\vert f^{\prime }\left( tx+\left(
1-t\right) u\right) \right\vert dt  \notag \\
&\leq &\frac{\left( v-x\right) ^{2}}{\left( v-u\right) ^{2}}\left(
\int_{0}^{1}\left( tu+\left( 1-t\right) v\right) ^{p}dt\right) ^{\frac{1}{p}%
}\left( \int_{0}^{1}\left\vert f^{\prime }\left( tx+\left( 1-t\right)
v\right) \right\vert ^{q}dt\right) ^{\frac{1}{q}}  \notag \\
&&+\frac{\left( x-u\right) ^{2}}{\left( v-u\right) ^{2}}\left(
\int_{0}^{1}\left( tv+\left( 1-t\right) u\right) ^{p}dt\right) ^{\frac{1}{p}%
}\left( \int_{0}^{1}\left\vert f^{\prime }\left( tx+\left( 1-t\right)
u\right) \right\vert ^{q}dt\right) ^{\frac{1}{q}}.  \notag
\end{eqnarray}%
Since $\left\vert f^{\prime }\right\vert ^{q}$\ is convex on $\left[ u,v%
\right] ,$ by Hadamard's inequality, we have%
\begin{eqnarray}
\int_{0}^{1}\left\vert f^{\prime }\left( tx+\left( 1-t\right) v\right)
\right\vert ^{q}dt &\leq &\frac{\left\vert f^{\prime }\left( x\right)
\right\vert ^{q}+\left\vert f^{\prime }\left( v\right) \right\vert ^{q}}{2}
\label{2a} \\
\int_{0}^{1}\left\vert f^{\prime }\left( tx+\left( 1-t\right) u\right)
\right\vert ^{q}dt &\leq &\frac{\left\vert f^{\prime }\left( x\right)
\right\vert ^{q}+\left\vert f^{\prime }\left( u\right) \right\vert ^{q}}{2}
\label{2b}
\end{eqnarray}%
It can be easily seen that%
\begin{equation}
\int_{0}^{1}\left( tu+\left( 1-t\right) v\right) ^{p}dt=\int_{0}^{1}\left(
tv+\left( 1-t\right) u\right) ^{p}dt=\frac{v^{p+1}-u^{p+1}}{\left(
v-u\right) \left( p+1\right) }=L_{p}^{p}\left( u,v\right)  \label{2c}
\end{equation}%
If expressions (\ref{2a})-(\ref{2c}) are written in (\ref{2}), we obtain%
\begin{eqnarray*}
&&\left\vert \frac{\left( v-x\right) \left( vf\left( v\right) -uf\left(
x\right) \right) +\left( x-u\right) \left( vf\left( x\right) -uf\left(
u\right) \right) }{\left( v-u\right) ^{2}}-\frac{1}{v-u}\int_{u}^{v}f\left(
\mu \right) d\mu \right\vert \\
&\leq &\frac{\left( v-x\right) ^{2}}{\left( v-u\right) ^{2}}L_{p}\left(
u,v\right) \left( \frac{\left\vert f^{\prime }\left( x\right) \right\vert
^{q}+\left\vert f^{\prime }\left( v\right) \right\vert ^{q}}{2}\right) ^{%
\frac{1}{q}}+\frac{\left( x-u\right) ^{2}}{\left( v-u\right) ^{2}}%
L_{p}\left( u,v\right) \left( \frac{\left\vert f^{\prime }\left( x\right)
\right\vert ^{q}+\left\vert f^{\prime }\left( u\right) \right\vert ^{q}}{2}%
\right) ^{\frac{1}{q}}.
\end{eqnarray*}%
The proof is completed.
\end{proof}

\begin{theorem}
\label{t4}Let $f:J\rightarrow 
\mathbb{R}
$ be a differentiable function on $J^{\circ }$. If $\left\vert f^{\prime
}\right\vert ^{q}$\ is $s$-convex on $\left[ u,v\right] $ for some fixed $%
s\in \left( 0,1\right] $ and$\ q>1$ with $1/p+1/q=1$, then%
\begin{eqnarray*}
&&\left\vert \frac{\left( v-x\right) \left( vf\left( v\right) -uf\left(
x\right) \right) +\left( x-u\right) \left( vf\left( x\right) -uf\left(
u\right) \right) }{\left( v-u\right) ^{2}}-\frac{1}{v-u}\int_{u}^{v}f\left(
\mu \right) d\mu \right\vert \\
&\leq &\frac{\left( v-x\right) ^{2}}{\left( v-u\right) ^{2}}L_{p}\left(
u,v\right) \left( \frac{\left\vert f^{\prime }\left( x\right) \right\vert
^{q}+\left\vert f^{\prime }\left( v\right) \right\vert ^{q}}{s+1}\right) ^{%
\frac{1}{q}}+\frac{\left( x-u\right) ^{2}}{\left( v-u\right) ^{2}}%
L_{p}\left( u,v\right) \left( \frac{\left\vert f^{\prime }\left( x\right)
\right\vert ^{q}+\left\vert f^{\prime }\left( u\right) \right\vert ^{q}}{s+1}%
\right) ^{\frac{1}{q}}.
\end{eqnarray*}%
for each $x\in \left[ u,v\right] .$
\end{theorem}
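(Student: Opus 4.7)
The plan is to mirror the proof of Theorem \ref{t3}, replacing the use of the ordinary Hermite–Hadamard inequality by its $s$-convex analogue (the right-hand inequality in \eqref{109}) at the one place where convexity of $|f'|^q$ enters. Since the $L_p(u,v)$ factor in Theorem \ref{t3} comes from the Hölder estimate applied to the weights $tu+(1-t)v$ and $tv+(1-t)u$, and these weights are unchanged here, that factor will reappear verbatim; only the mean of $|f'|^q$ across the interval will change, from $\frac{|f'(x)|^q+|f'(v)|^q}{2}$ to $\frac{|f'(x)|^q+|f'(v)|^q}{s+1}$, and analogously at $u$.

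Concretely, I would start from Lemma \ref{ll}, take absolute values, and push them inside the two integrals to obtain the same first estimate that appears as the second displayed inequality in the proof of Theorem \ref{t3}. Next I would apply Hölder's inequality on $[0,1]$ with exponents $p,q$ to each of the two integrals, separating the weight $(tu+(1-t)v)$ (resp.\ $(tv+(1-t)u)$) from $|f'(tx+(1-t)v)|$ (resp.\ $|f'(tx+(1-t)u)|$). The $p$-part is handled by the identity \eqref{2c}, giving a factor $L_p(u,v)$ in each term.

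For the $q$-part, the key step is to bound $\int_0^1 |f'(tx+(1-t)v)|^q\,dt$. Since $|f'|^q$ is $s$-convex on $[u,v]$, applying the right half of \eqref{109} to the interval with endpoints $x$ and $v$ (after the change of variables $y=tx+(1-t)v$, which turns the interval integral into an integral over $t\in[0,1]$) yields
\begin{equation*}
\int_0^1 |f'(tx+(1-t)v)|^q\,dt \;\leq\; \frac{|f'(x)|^q+|f'(v)|^q}{s+1},
\end{equation*}
and likewise with $v$ replaced by $u$. Plugging these two estimates together with \eqref{2c} into the Hölder bound gives exactly the inequality claimed.

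I do not expect any real obstacle: the only substantive change from Theorem \ref{t3} is the application of the $s$-convex Hermite–Hadamard bound in place of the classical one, and the rest is formally identical. The one detail worth checking carefully is that \eqref{109} is indeed applicable here — that is, that $|f'|^q$, being $s$-convex on $[u,v]$, is integrable on the sub-intervals $[x,v]$ and $[u,x]$; this is immediate from the hypothesis $f'\in L[u,v]$ together with $s$-convexity (which gives pointwise boundedness on compact sub-intervals of $J^{\circ}$), so one may invoke \eqref{109} without further justification.
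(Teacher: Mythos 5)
Your proposal is correct and is essentially the proof the paper intends: the paper's own proof of Theorem \ref{t4} just says it is ``similar to Theorem \ref{t3}'', and your argument fills this in exactly as expected --- Lemma \ref{ll}, Hölder with the weight identity \eqref{2c} giving $L_{p}(u,v)$, and the right-hand side of \eqref{109} (equivalently, integrating the defining inequality \eqref{105} for $|f'|^{q}$, which gives $\int_{0}^{1}t^{s}\,dt=\int_{0}^{1}(1-t)^{s}\,dt=\tfrac{1}{s+1}$) in place of the classical Hadamard bound. No gaps.
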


\begin{proof}
The proof of Theorem \ref{t4} is similar to Theorem \ref{t3}.
\end{proof}

\begin{corollary}
\label{c3}In Theorem \ref{t4},

i) if we take $x=u$ or $x=v,$ we get%
\begin{equation}
\left\vert \frac{vf\left( v\right) -uf\left( u\right) }{v-u}-\frac{1}{v-u}%
\int_{u}^{v}f\left( \mu \right) d\mu \right\vert \leq L_{p}\left( u,v\right)
\left( \frac{\left\vert f^{\prime }\left( u\right) \right\vert
^{q}+\left\vert f^{\prime }\left( v\right) \right\vert ^{q}}{s+1}\right) ^{%
\frac{1}{q}}.  \label{c31}
\end{equation}%
ii) if we take $x=\frac{u+v}{2}$ and since $\left( \frac{1}{s+1}\right) ^{%
\frac{1}{q}}\leq 1,$ $s\in \left( 0,1\right] ,$ we get%
\begin{eqnarray}
&&\left\vert \frac{vf\left( v\right) -uf\left( u\right) }{2\left( v-u\right) 
}+\frac{1}{2}f\left( \frac{u+v}{2}\right) -\frac{1}{v-u}\int_{u}^{v}f\left(
\mu \right) d\mu \right\vert  \notag \\
&\leq &\frac{L_{p}\left( u,v\right) }{4}\left[ \left( \frac{\left\vert
f^{\prime }\left( \frac{u+v}{2}\right) \right\vert ^{q}+\left\vert f^{\prime
}\left( v\right) \right\vert ^{q}}{s+1}\right) ^{\frac{1}{q}}+\left( \frac{%
\left\vert f^{\prime }\left( \frac{u+v}{2}\right) \right\vert
^{q}+\left\vert f^{\prime }\left( u\right) \right\vert ^{q}}{s+1}\right) ^{%
\frac{1}{q}}\right]  \label{c32} \\
&\leq &\frac{L_{p}\left( u,v\right) }{4}\left[ \left( \left\vert f^{\prime
}\left( \frac{u+v}{2}\right) \right\vert ^{q}+\left\vert f^{\prime }\left(
v\right) \right\vert ^{q}\right) ^{\frac{1}{q}}+\left( \left\vert f^{\prime
}\left( \frac{u+v}{2}\right) \right\vert ^{q}+\left\vert f^{\prime }\left(
u\right) \right\vert ^{q}\right) ^{\frac{1}{q}}\right]  \label{c33}
\end{eqnarray}
\end{corollary}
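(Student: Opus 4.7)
The plan is to derive both parts of the corollary by direct specialization of Theorem \ref{t4}, evaluating the generic bound at particular choices of $x$ in $[u,v]$.

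For part (i), I would substitute $x=u$ into the inequality in Theorem \ref{t4}. The coefficient $(x-u)^{2}=0$ kills the second summand on the right-hand side, while $(v-x)^{2}/(v-u)^{2}$ becomes $1$ in the first summand. On the left-hand side, the term $(x-u)(vf(x)-uf(u))$ vanishes and $(v-x)(vf(v)-uf(x))$ reduces to $(v-u)(vf(v)-uf(u))$; dividing by $(v-u)^{2}$ produces $(vf(v)-uf(u))/(v-u)$. The choice $x=v$ is symmetric and yields the same identity, giving (\ref{c31}).

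For part (ii), I would set $x=\tfrac{u+v}{2}$, so that both $(v-x)^{2}/(v-u)^{2}$ and $(x-u)^{2}/(v-u)^{2}$ equal $1/4$. The left-hand numerator collapses cleanly: the mixed $uf(x)$ and $vf(x)$ contributions combine as
$$(v-x)(vf(v)-uf(x))+(x-u)(vf(x)-uf(u)) = \tfrac{v-u}{2}\bigl[vf(v)-uf(u)+(v-u)f\bigl(\tfrac{u+v}{2}\bigr)\bigr],$$
and after division by $(v-u)^{2}$ this yields $\tfrac{vf(v)-uf(u)}{2(v-u)}+\tfrac{1}{2}f\bigl(\tfrac{u+v}{2}\bigr)$, which is precisely the expression inside the absolute value in (\ref{c32}). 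Copying the bound of Theorem \ref{t4} at this point gives inequality (\ref{c32}) directly.

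For the refinement (\ref{c33}), I would simply pull the factor $(1/(s+1))^{1/q}$ out of each of the two $(\cdot)^{1/q}$ terms and use the elementary observation that $s\in(0,1]$ together with $q>1$ forces $1/(s+1)\le 1$, hence $(1/(s+1))^{1/q}\le 1$; replacing that factor by $1$ only enlarges the bound. The main obstacle is merely the bookkeeping in the case $x=\tfrac{u+v}{2}$ to verify that the two summands in the numerator of the left-hand side of Theorem \ref{t4} combine to the stated average of boundary terms plus midpoint value; once that algebra is carried out, the rest is routine substitution and a one-line monotonicity argument.
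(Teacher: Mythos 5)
Your proposal is correct and matches the paper's intent exactly: the paper states Corollary \ref{c3} as an immediate specialization of Theorem \ref{t4} without writing out the substitutions, and your computations (the vanishing of one summand at $x=u$ or $x=v$, the collapse of the numerator to $\frac{vf(v)-uf(u)}{2(v-u)}+\frac{1}{2}f\left(\frac{u+v}{2}\right)$ at the midpoint, and the bound $\left(\frac{1}{s+1}\right)^{1/q}\leq 1$) supply precisely the omitted routine details.
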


\begin{proposition}
Let $u,v\in J^{\circ },\ 0<u<v$ and $s\in \left( 0,1\right] ,$ then%
\begin{eqnarray*}
&&\left\vert \left( s-1\right) L_{s}^{s}\left( u,v\right) +A^{s}\left(
u,v\right) \right\vert \\
&\leq &\frac{L_{p}\left( u,v\right) }{2}\left( \frac{s}{s+1}\right) ^{\frac{1%
}{q}}\left[ \left( \left( \frac{u+v}{2}\right) ^{\left( s-1\right)
q}+v^{\left( s-1\right) q}\right) ^{\frac{1}{q}}+\left( \left( \frac{u+v}{2}%
\right) ^{\left( s-1\right) q}+u^{\left( s-1\right) q}\right) ^{\frac{1}{q}}%
\right]
\end{eqnarray*}
\end{proposition}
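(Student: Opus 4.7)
The plan is to apply inequality (\ref{c32}) of Corollary \ref{c3} directly to the $s$-convex function $f(x)=x^{s}$ on $J^{\circ}=(0,\infty)$, and then to recognize every resulting term as one of the means $A$, $L_{s}$, $L_{p}$ introduced at the start of Section~2.

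First I would evaluate the three ingredients of the left-hand side of (\ref{c32}) for $f(x)=x^{s}$: a direct integration gives $\frac{1}{v-u}\int_{u}^{v}\mu^{s}\,d\mu = L_{s}^{s}(u,v)$, the boundary term is $\frac{vf(v)-uf(u)}{2(v-u)} = \frac{s+1}{2}L_{s}^{s}(u,v)$, and the midpoint value is $\frac{1}{2}f(\frac{u+v}{2}) = \frac{1}{2}A^{s}(u,v)$. Summing with the prescribed signs collapses the left-hand side to exactly the quantity $\left|(s-1)L_{s}^{s}(u,v)+A^{s}(u,v)\right|$ appearing in the statement, in the same manner as in equation (\ref{q}).

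For the right-hand side, the derivative $f'(x)=s x^{s-1}$ gives $|f'(x)|^{q}=s^{q}x^{(s-1)q}$. Substituting these values at $x=(u+v)/2$, $x=u$, $x=v$ into the two bracketed terms of (\ref{c32}), pulling out $s^{q}$ before taking the $1/q$-power, and combining with the factor $1/(s+1)$, produces the prefactor $\left(s/(s+1)\right)^{1/q}$ and leaves the two bracketed sums $\bigl((\tfrac{u+v}{2})^{(s-1)q}+v^{(s-1)q}\bigr)^{1/q}$ and $\bigl((\tfrac{u+v}{2})^{(s-1)q}+u^{(s-1)q}\bigr)^{1/q}$. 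Together with the $L_{p}(u,v)/4$ factor from (\ref{c32}) this reproduces the right-hand side of the proposition.

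The only non-routine step is to verify that $|f'|^{q}$ satisfies the $s$-convexity hypothesis required by Corollary \ref{c3}. Since $(s-1)q\le 0$ the function $x\mapsto x^{(s-1)q}$ is decreasing on $(0,\infty)$, so this cannot be checked by the same ``power function'' argument used for $f$ itself; one has to invoke $s$-convexity on the open half-line $J^{\circ}$ rather than on $J$. Under the running hypothesis $0<u<v$ this poses no difficulty, and once it is in hand the conclusion follows by a purely algebraic substitution, with no further inequality invoked.
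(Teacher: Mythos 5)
Your argument is correct and coincides with the paper's own (one-line) proof: apply (\ref{c32}) to $f(x)=x^{s}$ and identify the resulting quantities as the means $A$, $L_{s}$, $L_{p}$. Three pieces of bookkeeping deserve tightening, though none affects the validity of the conclusion. First, the left-hand side of (\ref{c32}) actually collapses to $\tfrac{1}{2}\left\vert (s-1)L_{s}^{s}(u,v)+A^{s}(u,v)\right\vert$, not to $\left\vert (s-1)L_{s}^{s}(u,v)+A^{s}(u,v)\right\vert$ (the paper's own display (\ref{q}) contains the same factor-of-two slip); it is precisely this missing $\tfrac12$ that converts the $L_{p}(u,v)/4$ of (\ref{c32}) into the $L_{p}(u,v)/2$ of the statement, so your claim that the $L_{p}/4$ factor directly ``reproduces'' the right-hand side needs the doubling made explicit. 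Second, pulling $s^{q}$ out of each bracket and taking the $1/q$-th power yields the prefactor $\left(s^{q}/(s+1)\right)^{1/q}=s/(s+1)^{1/q}$, which is bounded by the stated $\left(s/(s+1)\right)^{1/q}$ only via the extra (harmless) observation that $s^{q}\leq s$ for $s\in (0,1]$ and $q>1$; as written you assert the stated prefactor comes out directly, which it does not. Third, you correctly flag that the hypothesis of Corollary \ref{c3} --- that $\left\vert f^{\prime }\right\vert ^{q}=s^{q}x^{(s-1)q}$ be $s$-convex in the second sense --- must be verified, but you leave the verification hanging; it is short: since $(s-1)q\leq 0$, the map $x\mapsto x^{(s-1)q}$ is nonnegative and convex on $(0,\infty )$, and every nonnegative convex function is $s$-convex in the second sense because $t\leq t^{s}$ and $1-t\leq (1-t)^{s}$ for $t\in \left[ 0,1\right] $ and $s\in (0,1]$. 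With those three points supplied, the proof is complete and identical in substance to the paper's.
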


\begin{proof}
The proof follows from (\ref{c32}) applied\ to the $s$-convex function $%
f\left( x\right) =x^{s}.$ Equality in (\ref{q}) holds if and only if $s=1$
and $u=v.$
\end{proof}

\begin{theorem}
\label{t5}Let $f:J\rightarrow 
\mathbb{R}
$ be a differentiable function on $J^{\circ }.$ If $\left\vert f^{\prime
}\right\vert ^{q}$\ is $s$-convex on $\left[ u,v\right] $ for some fixed $%
s\in \left( 0,1\right] $\ and $q\geq 1$, then%
\begin{eqnarray*}
&&\left\vert \frac{\left( v-x\right) \left( vf\left( v\right) -uf\left(
x\right) \right) +\left( x-u\right) \left( vf\left( x\right) -uf\left(
u\right) \right) }{\left( v-u\right) ^{2}}-\frac{1}{v-u}\int_{u}^{v}f\left(
\mu \right) d\mu \right\vert \\
&\leq &\frac{\left( v-x\right) ^{2}}{\left( v-u\right) ^{2}}A^{1-\frac{1}{q}%
}\left( u,v\right) \left( \frac{1}{\left( s+1\right) \left( s+2\right) }%
\right) ^{\frac{1}{q}}\left( \left( \left( s+1\right) u+v\right) \left\vert
f^{\prime }\left( x\right) \right\vert ^{q}+\left( \left( s+1\right)
v+u\right) \left\vert f^{\prime }\left( v\right) \right\vert ^{q}dt\right) ^{%
\frac{1}{q}} \\
&&+\frac{\left( x-u\right) ^{2}}{\left( v-u\right) ^{2}}A^{1-\frac{1}{q}%
}\left( u,v\right) \left( \frac{1}{\left( s+1\right) \left( s+2\right) }%
\right) ^{\frac{1}{q}}\left( \left( \left( s+1\right) v+u\right) \left\vert
f^{\prime }\left( x\right) \right\vert ^{q}+\left( \left( s+1\right)
u+v\right) \left\vert f^{\prime }\left( u\right) \right\vert ^{q}dt\right) ^{%
\frac{1}{q}}
\end{eqnarray*}%
for each $x\in \left[ u,v\right] .$
\end{theorem}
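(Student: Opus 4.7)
The plan is to start from Lemma~\ref{ll}, push absolute values inside the two integrals, and then handle each integral by a weighted power-mean inequality (rather than by H\"{o}lder as in Theorem~\ref{t3}). The weight will be the affine factor $tu+(1-t)v$ (resp.\ $tv+(1-t)u$) that already appears in the integrand, and the exponent split will be $1-\tfrac{1}{q}$ and $\tfrac{1}{q}$. Concretely, for the first summand I would write
\[
\int_{0}^{1}\!\bigl(tu+(1-t)v\bigr)\,\bigl|f'(tx+(1-t)v)\bigr|\,dt
\le \Bigl(\!\int_{0}^{1}\!(tu+(1-t)v)\,dt\Bigr)^{\!1-\frac{1}{q}}
\Bigl(\!\int_{0}^{1}\!(tu+(1-t)v)\,|f'(tx+(1-t)v)|^{q}dt\Bigr)^{\!\frac{1}{q}},
\]
and analogously for the second summand. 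The unweighted factor in each case is $\int_{0}^{1}(tu+(1-t)v)\,dt=\int_{0}^{1}(tv+(1-t)u)\,dt=\tfrac{u+v}{2}=A(u,v)$, which accounts for the $A^{1-1/q}(u,v)$ prefactor.

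Next I would use the $s$-convexity of $|f'|^{q}$ to bound the integrands, namely
$|f'(tx+(1-t)v)|^{q}\le t^{s}|f'(x)|^{q}+(1-t)^{s}|f'(v)|^{q}$ and the analogous estimate centered at $u$. The remaining task is to evaluate the four Beta-type moments
\[
\int_{0}^{1}t^{s+1}\,dt=\tfrac{1}{s+2},\qquad \int_{0}^{1}t^{s}(1-t)\,dt=\tfrac{1}{(s+1)(s+2)},
\]
(and their $t\leftrightarrow 1-t$ counterparts), from which a direct expansion gives
\[
\int_{0}^{1}(tu+(1-t)v)\bigl(t^{s}|f'(x)|^{q}+(1-t)^{s}|f'(v)|^{q}\bigr)\,dt
=\frac{((s+1)u+v)\,|f'(x)|^{q}+(u+(s+1)v)\,|f'(v)|^{q}}{(s+1)(s+2)},
\]
and symmetrically with $u\leftrightarrow v$ for the second piece. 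Combining with the $A(u,v)^{1-1/q}$ prefactor and the geometric weights $(v-x)^{2}/(v-u)^{2}$ and $(x-u)^{2}/(v-u)^{2}$ supplied by Lemma~\ref{ll} recovers the stated bound.

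I do not expect any real obstacle here: the proof is almost mechanical once one recognizes that the natural estimate is a weighted power mean rather than H\"{o}lder (which would split off $L_{p}(u,v)$ as in Theorem~\ref{t4}). The only point demanding minor care is keeping the roles of $u$ and $v$ straight in the two summands, since the second integral uses the weight $tv+(1-t)u$ (not $tu+(1-t)v$), so the $s$-convexity expansion produces the coefficients $(s+1)v+u$ and $(s+1)u+v$ in reversed positions, which matches the asymmetric form of the right-hand side in the statement. Since Theorem~\ref{t2} already performs essentially the same moment computations for $q=1$, those calculations can be quoted rather than repeated, making this result a power-mean refinement of Theorem~\ref{t2} in the same way Theorem~\ref{t4} is the H\"{o}lder refinement.
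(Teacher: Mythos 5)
Your proposal is correct and follows essentially the same route as the paper's own proof: Lemma~\ref{ll}, the weighted power-mean inequality with weight $tu+(1-t)v$ (resp.\ $tv+(1-t)u$) and exponent split $1-\tfrac{1}{q}$, $\tfrac{1}{q}$, then the $s$-convexity bound on $|f'|^{q}$ and the same Beta-type moment evaluations yielding the coefficients $\frac{(s+1)u+v}{(s+1)(s+2)}$ and $\frac{(s+1)v+u}{(s+1)(s+2)}$. The moment computations and the $u\leftrightarrow v$ bookkeeping in the second summand all check out.
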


\begin{proof}
Using Lemma \ref{ll} and the well-known power mean inequality and since $%
\left\vert f^{\prime }\right\vert ^{q}$\ is $s$-convex on $\left[ u,v\right]
,$ we establish%
\begin{eqnarray*}
&& \\
&&\left\vert \frac{\left( v-x\right) \left( vf\left( v\right) -uf\left(
x\right) \right) +\left( x-u\right) \left( vf\left( x\right) -uf\left(
u\right) \right) }{\left( v-u\right) ^{2}}-\frac{1}{v-u}\int_{u}^{v}f\left(
\mu \right) d\mu \right\vert \\
&\leq &\frac{\left( v-x\right) ^{2}}{\left( v-u\right) ^{2}}%
\int_{0}^{1}\left( tu+\left( 1-t\right) v\right) \left\vert f^{\prime
}\left( tx+\left( 1-t\right) v\right) \right\vert dt+\frac{\left( x-u\right)
^{2}}{\left( v-u\right) ^{2}}\int_{0}^{1}\left( tv+\left( 1-t\right)
u\right) \left\vert f^{\prime }\left( tx+\left( 1-t\right) u\right)
\right\vert dt \\
&\leq &\frac{\left( v-x\right) ^{2}}{\left( v-u\right) ^{2}}\left(
\int_{0}^{1}\left( tu+\left( 1-t\right) v\right) dt\right) ^{1-\frac{1}{q}%
}\left( \int_{0}^{1}\left( tu+\left( 1-t\right) v\right) \left\vert
f^{\prime }\left( tx+\left( 1-t\right) v\right) \right\vert ^{q}dt\right) ^{%
\frac{1}{q}} \\
&&+\frac{\left( x-u\right) ^{2}}{\left( v-u\right) ^{2}}\left(
\int_{0}^{1}\left( tv+\left( 1-t\right) u\right) dt\right) ^{1-\frac{1}{q}%
}\left( \int_{0}^{1}\left( tv+\left( 1-t\right) u\right) \left\vert
f^{\prime }\left( tx+\left( 1-t\right) u\right) \right\vert ^{q}dt\right) ^{%
\frac{1}{q}} \\
&\leq &\frac{\left( v-x\right) ^{2}}{\left( v-u\right) ^{2}}\left(
\int_{0}^{1}\left( tu+\left( 1-t\right) v\right) dt\right) ^{1-\frac{1}{q}%
}\left( \int_{0}^{1}\left( tu+\left( 1-t\right) v\right) \left(
t^{s}\left\vert f^{\prime }\left( x\right) \right\vert ^{q}+\left(
1-t\right) ^{s}\left\vert f^{\prime }\left( v\right) \right\vert ^{q}\right)
dt\right) ^{\frac{1}{q}} \\
&&+\frac{\left( x-u\right) ^{2}}{\left( v-u\right) ^{2}}\left(
\int_{0}^{1}\left( tv+\left( 1-t\right) u\right) dt\right) ^{1-\frac{1}{q}%
}\left( \int_{0}^{1}\left( tv+\left( 1-t\right) u\right) \left(
t^{s}\left\vert f^{\prime }\left( x\right) \right\vert ^{q}+\left(
1-t\right) ^{s}\left\vert f^{\prime }\left( u\right) \right\vert ^{q}\right)
dt\right) ^{\frac{1}{q}} \\
&=&\frac{\left( v-x\right) ^{2}}{\left( v-u\right) ^{2}}\left( \frac{u+v}{2}%
\right) ^{1-\frac{1}{q}}\left( \frac{\left( s+1\right) u+v}{\left(
s+1\right) \left( s+2\right) }\left\vert f^{\prime }\left( x\right)
\right\vert ^{q}+\frac{\left( s+1\right) v+u}{\left( s+1\right) \left(
s+2\right) }\left\vert f^{\prime }\left( v\right) \right\vert ^{q}dt\right)
^{\frac{1}{q}} \\
&&+\frac{\left( x-u\right) ^{2}}{\left( v-u\right) ^{2}}\left( \frac{u+v}{2}%
\right) ^{1-\frac{1}{q}}\left( \frac{\left( s+1\right) v+u}{\left(
s+1\right) \left( s+2\right) }\left\vert f^{\prime }\left( x\right)
\right\vert ^{q}+\frac{\left( s+1\right) u+v}{\left( s+1\right) \left(
s+2\right) }\left\vert f^{\prime }\left( u\right) \right\vert ^{q}dt\right)
^{\frac{1}{q}}
\end{eqnarray*}%
The proof is completed.
\end{proof}

\begin{theorem}
\label{t6}Let $f:J\rightarrow 
\mathbb{R}
$ be a differentiable function on $J^{\circ }.$ If $\left\vert f^{\prime
}\right\vert ^{q}$\ is convex on $\left[ u,v\right] $ and $q\geq 1$, then%
\begin{eqnarray*}
&&\left\vert \frac{\left( v-x\right) \left( vf\left( v\right) -uf\left(
x\right) \right) +\left( x-u\right) \left( vf\left( x\right) -uf\left(
u\right) \right) }{\left( v-u\right) ^{2}}-\frac{1}{v-u}\int_{u}^{v}f\left(
\mu \right) d\mu \right\vert \\
&\leq &\frac{\left( v-x\right) ^{2}}{\left( v-u\right) ^{2}}A^{1-\frac{1}{q}%
}\left( u,v\right) \left( \frac{1}{6}\right) ^{\frac{1}{q}}\left( \left(
2u+v\right) \left\vert f^{\prime }\left( x\right) \right\vert ^{q}+\left(
2v+u\right) \left\vert f^{\prime }\left( v\right) \right\vert ^{q}dt\right)
^{\frac{1}{q}} \\
&&+\frac{\left( x-u\right) ^{2}}{\left( v-u\right) ^{2}}A^{1-\frac{1}{q}%
}\left( u,v\right) \left( \frac{1}{6}\right) ^{\frac{1}{q}}\left( \left(
2v+u\right) \left\vert f^{\prime }\left( x\right) \right\vert ^{q}+\left(
2u+v\right) \left\vert f^{\prime }\left( u\right) \right\vert ^{q}dt\right)
^{\frac{1}{q}}
\end{eqnarray*}%
for each $x\in \left[ u,v\right] .$
\end{theorem}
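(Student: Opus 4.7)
The plan is to imitate the proof of Theorem \ref{t5} verbatim, specialising to the case $s=1$ which is exactly the hypothesis that $|f'|^q$ is convex rather than $s$-convex. The three ingredients are Lemma \ref{ll}, the power mean (weighted Hölder) inequality with weight $q/(q-1)$, and the convexity of $|f'|^q$.

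First, I would apply Lemma \ref{ll}, take absolute values, and pass them inside both integrals to obtain the bound
\begin{equation*}
\frac{(v-x)^{2}}{(v-u)^{2}}\int_{0}^{1}(tu+(1-t)v)|f'(tx+(1-t)v)|\,dt + \frac{(x-u)^{2}}{(v-u)^{2}}\int_{0}^{1}(tv+(1-t)u)|f'(tx+(1-t)u)|\,dt.
\end{equation*}
Second, on each of these integrals I would split the integrand as $(tu+(1-t)v)\cdot 1$ times $|f'(\cdot)|$ and apply the power mean inequality in the form
\begin{equation*}
\int_{0}^{1}g(t)h(t)\,dt \le \Bigl(\int_{0}^{1}g(t)\,dt\Bigr)^{1-\frac{1}{q}}\Bigl(\int_{0}^{1}g(t)h(t)^{q}\,dt\Bigr)^{\frac{1}{q}},
\end{equation*}
with $g(t)=tu+(1-t)v$ (resp.\ $tv+(1-t)u$) and $h(t)=|f'(tx+(1-t)v)|$ (resp.\ the $u$-analogue). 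The elementary computation $\int_{0}^{1}(tu+(1-t)v)\,dt = \int_{0}^{1}(tv+(1-t)u)\,dt = (u+v)/2 = A(u,v)$ supplies the factor $A^{1-1/q}(u,v)$.

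Third, I would insert the ordinary convexity estimate $|f'(tx+(1-t)v)|^{q}\le t|f'(x)|^{q}+(1-t)|f'(v)|^{q}$ (and its $u$-counterpart) inside the remaining $L^{q}$-factor, reducing each inner integral to a linear combination of the two moments
\begin{equation*}
\int_{0}^{1}t(tu+(1-t)v)\,dt = \tfrac{2u+v}{6}, \qquad \int_{0}^{1}(1-t)(tu+(1-t)v)\,dt = \tfrac{u+2v}{6},
\end{equation*}
and analogously for the $u$-integral, which swaps the coefficients. This produces the inner expressions $\frac{1}{6}\bigl((2u+v)|f'(x)|^{q}+(u+2v)|f'(v)|^{q}\bigr)$ and $\frac{1}{6}\bigl((2v+u)|f'(x)|^{q}+(2u+v)|f'(u)|^{q}\bigr)$, yielding the claimed inequality after extracting the $(1/6)^{1/q}$ factor.

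There is no real obstacle here: the argument is structurally identical to Theorem \ref{t5}, and the only point requiring any care is keeping track of which of $u$ and $v$ multiplies which term in the four first-moment integrals, since the symmetry swap between the two halves of the decomposition in Lemma \ref{ll} makes it easy to transpose the coefficients of $|f'(x)|^{q}$ in the two summands. Once these bookkeeping computations are done correctly, the result follows immediately as the $s=1$ case of Theorem \ref{t5}.
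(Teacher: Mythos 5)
Your proposal is correct and is essentially the paper's own argument: the paper simply observes that Theorem \ref{t6} is the case $s=1$ of Theorem \ref{t5}, and your write-up just carries out that specialisation explicitly (with the moment computations $\int_{0}^{1}t(tu+(1-t)v)\,dt=(2u+v)/6$ etc.\ checking out). Nothing further is needed.
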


\begin{proof}
In Theorem \ref{t5}, if we take $s=1,$ then the assertion is proved.
\end{proof}

\begin{corollary}
\label{c4}In Theorem \ref{t5},

i) if we take $x=u$ or $x=v,$ we get%
\begin{eqnarray*}
&&\left\vert \frac{\left( v-u\right) \left( vf\left( v\right) -uf\left(
u\right) \right) }{\left( v-u\right) ^{2}}-\frac{1}{v-u}\int_{u}^{v}f\left(
\mu \right) d\mu \right\vert \\
&\leq &A^{1-\frac{1}{q}}\left( u,v\right) \left( \frac{1}{\left( s+1\right)
\left( s+2\right) }\right) ^{\frac{1}{q}}\left( \left( \left( s+1\right)
u+v\right) \left\vert f^{\prime }\left( u\right) \right\vert ^{q}+\left(
\left( s+1\right) v+u\right) \left\vert f^{\prime }\left( v\right)
\right\vert ^{q}dt\right) ^{\frac{1}{q}}
\end{eqnarray*}%
ii) if we choose $x=u$ or $x=v$ and $s=q=1,$ we get%
\begin{equation*}
\left\vert \frac{\left( v-u\right) \left( vf\left( v\right) -uf\left(
u\right) \right) }{\left( v-u\right) ^{2}}-\frac{1}{v-u}\int_{u}^{v}f\left(
\mu \right) d\mu \right\vert \leq \frac{1}{6}\left( \left( 2u+v\right)
\left\vert f^{\prime }\left( u\right) \right\vert +\left( 2v+u\right)
\left\vert f^{\prime }\left( v\right) \right\vert dt\right)
\end{equation*}%
iii) if we take $x=\frac{u+v}{2}$ and since $\left( \frac{1}{\left(
s+1\right) \left( s+2\right) }\right) ^{\frac{1}{q}}\leq 1,$ $s\in \left( 0,1%
\right] ,$ we get%
\begin{eqnarray}
&&\left\vert \frac{vf\left( v\right) -uf\left( u\right) }{2\left( v-u\right) 
}+\frac{1}{2}f\left( \frac{u+v}{2}\right) -\frac{1}{v-u}\int_{u}^{v}f\left(
\mu \right) d\mu \right\vert  \notag \\
&&  \label{c41} \\
&\leq &\frac{1}{4}A^{1-\frac{1}{q}}\left( u,v\right) \left( \frac{1}{\left(
s+1\right) \left( s+2\right) }\right) ^{\frac{1}{q}}\left( \left( \left(
s+1\right) u+v\right) \left\vert f^{\prime }\left( \frac{u+v}{2}\right)
\right\vert ^{q}+\left( \left( s+1\right) v+u\right) \left\vert f^{\prime
}\left( v\right) \right\vert ^{q}dt\right) ^{\frac{1}{q}}  \notag \\
&&+\frac{1}{4}A^{1-\frac{1}{q}}\left( u,v\right) \left( \frac{1}{\left(
s+1\right) \left( s+2\right) }\right) ^{\frac{1}{q}}\left( \left( \left(
s+1\right) v+u\right) \left\vert f^{\prime }\left( \frac{u+v}{2}\right)
\right\vert ^{q}+\left( \left( s+1\right) u+v\right) \left\vert f^{\prime
}\left( u\right) \right\vert ^{q}dt\right) ^{\frac{1}{q}}  \notag \\
&&  \label{c42} \\
&\leq &\frac{1}{4}A^{1-\frac{1}{q}}\left( u,v\right) \left( \left( \left(
s+1\right) u+v\right) \left\vert f^{\prime }\left( \frac{u+v}{2}\right)
\right\vert ^{q}+\left( \left( s+1\right) v+u\right) \left\vert f^{\prime
}\left( v\right) \right\vert ^{q}dt\right) ^{\frac{1}{q}}  \notag \\
&&+\frac{1}{4}A^{1-\frac{1}{q}}\left( u,v\right) \left( \left( \left(
s+1\right) v+u\right) \left\vert f^{\prime }\left( \frac{u+v}{2}\right)
\right\vert ^{q}+\left( \left( s+1\right) u+v\right) \left\vert f^{\prime
}\left( u\right) \right\vert ^{q}dt\right) ^{\frac{1}{q}}  \notag
\end{eqnarray}
\end{corollary}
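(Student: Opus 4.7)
The plan is to obtain all three parts of the corollary as direct specializations of Theorem \ref{t5}, by choosing the free parameter $x$ strategically so that the two--term bound collapses or simplifies.

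For (i), I would set $x=u$ (the case $x=v$ being symmetric). This forces $(x-u)^2=0$, so the entire second summand in the bound of Theorem \ref{t5} vanishes, while the prefactor $(v-x)^2/(v-u)^2$ in the first summand equals $1$. On the left--hand side, the piece $(x-u)(vf(x)-uf(u))$ disappears and the surviving $(v-x)(vf(v)-uf(x))$ collapses to $(v-u)(vf(v)-uf(u))$, giving exactly the stated expression. Part (ii) then follows by further setting $s=q=1$ in (i): the factor $A^{1-1/q}(u,v)$ reduces to $A^{0}=1$, the constant $1/((s+1)(s+2))$ becomes $1/6$, and the outer exponent $1/q$ disappears, leaving a plain linear combination of $|f'(u)|$ and $|f'(v)|$.

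The main work is in (iii). The substitution $x=(u+v)/2$ gives $v-x=x-u=(v-u)/2$, so both prefactors $(v-x)^2/(v-u)^2$ and $(x-u)^2/(v-u)^2$ equal $1/4$, producing the two symmetric summands with constant $1/4$. On the left--hand side the numerator telescopes: $(v-x)(vf(v)-uf(x)) + (x-u)(vf(x)-uf(u))$ factors as $\tfrac{v-u}{2}\bigl[vf(v)-uf(u)+(v-u)f(\tfrac{u+v}{2})\bigr]$, and dividing by $(v-u)^2$ yields $\tfrac{vf(v)-uf(u)}{2(v-u)}+\tfrac{1}{2}f(\tfrac{u+v}{2})$, which is precisely the expression on the left of (\ref{c41}). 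To pass from (\ref{c41}) to the weaker form (\ref{c42}), I would then drop the factor $\bigl(\tfrac{1}{(s+1)(s+2)}\bigr)^{1/q}$, which is at most $1$ since $(s+1)(s+2)\geq 2$ for $s\in(0,1]$ and $q\geq 1$. There is no substantive obstacle; the only care required is the algebraic telescoping on the left--hand side when $x=(u+v)/2$, and a brief verification of the trivial bound $(s+1)(s+2)\geq 1$ used to weaken (\ref{c41}) to (\ref{c42}).
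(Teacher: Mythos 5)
Your proposal is correct and is exactly the intended argument: the paper offers no separate proof for this corollary, treating it as a direct specialization of Theorem \ref{t5} via the substitutions $x=u$ (or $x=v$), then $s=q=1$, and $x=\frac{u+v}{2}$ together with the trivial bound $\left(\frac{1}{(s+1)(s+2)}\right)^{1/q}\leq 1$. Your verification of the vanishing second summand in (i), the collapse of the constants in (ii), and the telescoping of the left-hand numerator in (iii) all check out.
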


\begin{proposition}
Let $u,v\in J^{\circ },\ 0<u<v$ and $s\in \left( 0,1\right] ,$ then%
\begin{eqnarray*}
&&\left\vert \left( s-1\right) L_{s}^{s}\left( u,v\right) +A^{s}\left(
u,v\right) \right\vert \\
&\leq &\left( \frac{s^{q}A^{q-1}\left( u,v\right) }{\left( s+1\right) \left(
s+2\right) 2^{q}}\right) ^{\frac{1}{q}}\left( \left( \left( s+1\right)
u+v\right) A^{\left( s-1\right) q}\left( u,v\right) +\left( \left(
s+1\right) v+u\right) v^{\left( s-1\right) q}dt\right) ^{\frac{1}{q}} \\
&&+\left( \frac{s^{q}A^{q-1}\left( u,v\right) }{\left( s+1\right) \left(
s+2\right) 2^{q}}\right) ^{\frac{1}{q}}\left( \left( \left( s+1\right)
v+u\right) A^{\left( s-1\right) q}\left( u,v\right) +\left( \left(
s+1\right) u+v\right) u^{\left( s-1\right) q}dt\right) ^{\frac{1}{q}}
\end{eqnarray*}
\end{proposition}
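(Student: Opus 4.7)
The plan is to specialize inequality (\ref{c41}) of Corollary \ref{c4} to the test function $f(x)=x^{s}$, which is $s$-convex in the second sense for $s\in(0,1]$, and then multiply the resulting inequality through by $2$. Since $f'(x)=sx^{s-1}$, we have $|f'(x)|^{q}=s^{q}x^{(s-1)q}$, and I would first briefly confirm that this satisfies the $s$-convexity hypothesis used by Theorem \ref{t5} on $[u,v]\subset J^{\circ}$.

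First I would compute the left-hand side of (\ref{c41}) for $f(x)=x^{s}$. Using the identities $\int_{u}^{v}\mu^{s}\,d\mu=(v-u)L_{s}^{s}(u,v)$ and $vf(v)-uf(u)=v^{s+1}-u^{s+1}=(s+1)(v-u)L_{s}^{s}(u,v)$, together with $f\!\left(\tfrac{u+v}{2}\right)=A^{s}(u,v)$, the left side of (\ref{c41}) collapses to
\[
\left|\frac{(s+1)L_{s}^{s}(u,v)}{2}+\frac{A^{s}(u,v)}{2}-L_{s}^{s}(u,v)\right|
=\frac{1}{2}\bigl|(s-1)L_{s}^{s}(u,v)+A^{s}(u,v)\bigr|,
\]
which is exactly one half of the proposition's left-hand side.

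Next I would evaluate the right-hand side of (\ref{c41}) with $|f'(\tfrac{u+v}{2})|^{q}=s^{q}A^{(s-1)q}(u,v)$, $|f'(v)|^{q}=s^{q}v^{(s-1)q}$, and $|f'(u)|^{q}=s^{q}u^{(s-1)q}$. Factoring $s^{q}$ out of each inner bracket and pulling it outside the $1/q$ power, the outer coefficient of each summand becomes $\tfrac{s}{4}A^{1-1/q}(u,v)\bigl(\tfrac{1}{(s+1)(s+2)}\bigr)^{1/q}$, which is precisely one half of $\left(\tfrac{s^{q}A^{q-1}(u,v)}{(s+1)(s+2)\,2^{q}}\right)^{1/q}$ appearing in the proposition. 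Multiplying (\ref{c41}) through by $2$ therefore rescales both sides simultaneously into the claimed inequality, with equality reducing to the boundary case $s=1$ and $u=v$.

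The whole argument reduces to algebraic bookkeeping. The main (minor) obstacle is tracking the exponents cleanly when distributing the $s^{q}$, $A^{(s-1)q}$, and $[(s+1)(s+2)]^{-1/q}$ factors between the inside and outside of the $q$-th root, and verifying that $A^{q-1}$ in the proposition's outer factor matches $A^{1-1/q}$ from Corollary \ref{c4} after the $1/q$ power is extracted. No ingredient beyond (\ref{c41}) is needed.
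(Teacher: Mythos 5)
Your proposal is correct and is essentially the paper's own proof: the paper simply states that the result follows from (\ref{c41}) applied to $f(x)=x^{s}$, and your computation (the left side collapsing to $\tfrac{1}{2}\left\vert (s-1)L_{s}^{s}+A^{s}\right\vert$, the factor $s^{q}$ pulled out of the $q$-th roots, and the identification of $2\cdot\tfrac{1}{4}sA^{1-1/q}\bigl(\tfrac{1}{(s+1)(s+2)}\bigr)^{1/q}$ with $\bigl(\tfrac{s^{q}A^{q-1}}{(s+1)(s+2)2^{q}}\bigr)^{1/q}$) supplies exactly the bookkeeping the paper leaves implicit. Your remark that one must check $\vert f'\vert^{q}=s^{q}x^{(s-1)q}$ is $s$-convex (which holds since it is nonnegative and convex, hence $s$-convex in the second sense for $s\in(0,1]$) is a point the paper glosses over, but it does not change the route.
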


\begin{proof}
The proof follows from (\ref{c41}) applied\ to the $s$-convex function $%
f\left( x\right) =x^{s}.$
\end{proof}

\begin{theorem}
\label{t7}Let $f:J\rightarrow 
\mathbb{R}
$ be a differentiable function on $J^{\circ }.$ If $\left\vert f^{\prime
}\right\vert ^{q}$\ is $s$-concave on $\left[ u,v\right] $ for some fixed $%
s\in \left( 0,1\right] $\ and $q>1$ with $1/p+1/q=1$, then%
\begin{eqnarray*}
&&\left\vert \frac{\left( v-x\right) \left( vf\left( v\right) -uf\left(
x\right) \right) +\left( x-u\right) \left( vf\left( x\right) -uf\left(
u\right) \right) }{\left( v-u\right) ^{2}}-\frac{1}{v-u}\int_{u}^{v}f\left(
\mu \right) d\mu \right\vert \\
&\leq &\frac{2^{\frac{s-1}{q}}L_{p}\left( u,v\right) }{\left( v-u\right) ^{2}%
}\left[ \left( v-x\right) ^{2}\left\vert f^{\prime }\left( \frac{x+v}{2}%
\right) \right\vert +\left( x-u\right) ^{2}\left\vert f^{\prime }\left( 
\frac{x+u}{2}\right) \right\vert \right]
\end{eqnarray*}%
for each $x\in \left[ u,v\right] .$
\end{theorem}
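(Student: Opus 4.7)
The plan is to mirror the proof of Theorem \ref{t3}, with one decisive substitution at the end: replace the convex Hadamard upper bound on $\int_0^1|f'|^q$ by the $s$-concave version obtained from Theorem~2 (reversed). Specifically, starting from Lemma~\ref{ll} and the triangle inequality, I would bound
\[
\left|\frac{(v-x)(vf(v)-uf(x))+(x-u)(vf(x)-uf(u))}{(v-u)^2}-\frac{1}{v-u}\int_u^v f(\mu)\,d\mu\right|
\]
by the sum of two integrals, exactly as in the first display of the proof of Theorem~\ref{t3}, and then apply H\"older's inequality with exponents $p,q$ to each of the two terms, separating $(tu+(1-t)v)$ and $(tv+(1-t)u)$ (with exponent $p$) from $|f'(tx+(1-t)v)|^q$ and $|f'(tx+(1-t)u)|^q$.

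Next I would dispatch the $p$-factor using the identity (\ref{2c}) already established, so that both $\left(\int_0^1(tu+(1-t)v)^p dt\right)^{1/p}$ and $\left(\int_0^1(tv+(1-t)u)^p dt\right)^{1/p}$ equal $L_p(u,v)$. This reduces the work to the $q$-factors $\int_0^1|f'(tx+(1-t)v)|^q\,dt$ and $\int_0^1|f'(tx+(1-t)u)|^q\,dt$.

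The key step — and the only place where $s$-concavity is used — is to recognize these two integrals as averages of $|f'|^q$ over $[x,v]$ and $[u,x]$ respectively, via the affine change of variable $\mu=tx+(1-t)v$ (resp.\ $\mu=tx+(1-t)u$). Since $|f'|^q$ is $s$-concave on $[u,v]$, it is $s$-concave on each subinterval, and the reversed form of Theorem~2 yields
\[
\int_0^1\!|f'(tx+(1-t)v)|^q\,dt\le 2^{s-1}\left|f'\!\left(\tfrac{x+v}{2}\right)\right|^{q},\qquad
\int_0^1\!|f'(tx+(1-t)u)|^q\,dt\le 2^{s-1}\left|f'\!\left(\tfrac{x+u}{2}\right)\right|^{q}.
\]
Taking $q$-th roots extracts the factor $2^{(s-1)/q}$, and assembling everything gives precisely the claimed bound.

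The only step that requires a little care is the change of variable on the $|f'|^q$ integrals (handling the sign of $x-v$ and $x-u$ correctly so that one lands on an honest average over a positively oriented interval); beyond that, the argument is a direct transcription of the scheme used for Theorem~\ref{t3}, with the convex Hadamard midpoint inequality swapped for the $s$-concave left-hand inequality of Theorem~2. No additional machinery beyond Lemma~\ref{ll}, H\"older's inequality, and Theorem~2 is needed.
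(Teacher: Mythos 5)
Your proposal is correct and follows essentially the same route as the paper: Lemma \ref{ll} plus the triangle inequality, H\"older's inequality with the weight raised to the power $p$ yielding $L_{p}\left( u,v\right)$ via the identity (\ref{2c}), and then the reversed (i.e.\ $s$-concave) form of the left-hand inequality in (\ref{109}) applied to the averages of $\left\vert f^{\prime }\right\vert ^{q}$ over $\left[ x,v\right]$ and $\left[ u,x\right]$, producing the factor $2^{\frac{s-1}{q}}$ after taking $q$-th roots. Nothing further is needed.
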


\begin{proof}
Using Lemma \ref{ll} and using the H\"{o}lder inequality and since $%
\left\vert f^{\prime }\right\vert ^{q}$\ is $s$-concave on $\left[ u,v\right]
$ and using the inequality (\ref{109}), we establish%
\begin{eqnarray*}
&& \\
&&\left\vert \frac{\left( v-x\right) \left( vf\left( v\right) -uf\left(
x\right) \right) +\left( x-u\right) \left( vf\left( x\right) -uf\left(
u\right) \right) }{\left( v-u\right) ^{2}}-\frac{1}{v-u}\int_{u}^{v}f\left(
\mu \right) d\mu \right\vert \\
&\leq &\frac{\left( v-x\right) ^{2}}{\left( v-u\right) ^{2}}%
\int_{0}^{1}\left( tu+\left( 1-t\right) v\right) \left\vert f^{\prime
}\left( tx+\left( 1-t\right) v\right) \right\vert dt+\frac{\left( x-u\right)
^{2}}{\left( v-u\right) ^{2}}\int_{0}^{1}\left( tv+\left( 1-t\right)
u\right) \left\vert f^{\prime }\left( tx+\left( 1-t\right) u\right)
\right\vert dt \\
&\leq &\frac{\left( v-x\right) ^{2}}{\left( v-u\right) ^{2}}\left(
\int_{0}^{1}\left( tu+\left( 1-t\right) v\right) ^{p}dt\right) ^{\frac{1}{p}%
}\left( \int_{0}^{1}\left\vert f^{\prime }\left( tx+\left( 1-t\right)
v\right) \right\vert ^{q}dt\right) ^{\frac{1}{q}} \\
&&+\frac{\left( x-u\right) ^{2}}{\left( v-u\right) ^{2}}\left(
\int_{0}^{1}\left( tv+\left( 1-t\right) u\right) ^{p}dt\right) ^{\frac{1}{p}%
}\left( \int_{0}^{1}\left\vert f^{\prime }\left( tx+\left( 1-t\right)
u\right) \right\vert ^{q}dt\right) ^{\frac{1}{q}} \\
&\leq &\frac{\left( v-x\right) ^{2}}{\left( v-u\right) ^{2}}L_{p}\left(
u,v\right) \left( 2^{s-1}\left\vert f^{\prime }\left( \frac{x+v}{2}\right)
\right\vert ^{q}\right) ^{\frac{1}{q}}+\frac{\left( x-u\right) ^{2}}{\left(
v-u\right) ^{2}}L_{p}\left( u,v\right) \left( 2^{s-1}\left\vert f^{\prime
}\left( \frac{x+u}{2}\right) \right\vert ^{q}\right) ^{\frac{1}{q}}
\end{eqnarray*}%
The proof is completed.
\end{proof}

\begin{theorem}
\label{t8}Let $f:J\rightarrow 
\mathbb{R}
$ be a differentiable function on $J^{\circ }.$ If $\left\vert f^{\prime
}\right\vert ^{q}$\ is concave on $\left[ u,v\right] $ and $q>1$ with $%
1/p+1/q=1$, then%
\begin{eqnarray*}
&&\left\vert \frac{\left( v-x\right) \left( vf\left( v\right) -uf\left(
x\right) \right) +\left( x-u\right) \left( vf\left( x\right) -uf\left(
u\right) \right) }{\left( v-u\right) ^{2}}-\frac{1}{v-u}\int_{u}^{v}f\left(
\mu \right) d\mu \right\vert \\
&\leq &\frac{L_{p}\left( u,v\right) }{\left( v-u\right) ^{2}}\left[ \left(
v-x\right) ^{2}\left\vert f^{\prime }\left( \frac{x+v}{2}\right) \right\vert
+\left( x-u\right) ^{2}\left\vert f^{\prime }\left( \frac{x+u}{2}\right)
\right\vert \right]
\end{eqnarray*}%
for each $x\in \left[ u,v\right] .$
\end{theorem}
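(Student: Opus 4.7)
The plan is to observe that Theorem \ref{t8} is exactly the $s=1$ specialization of Theorem \ref{t7}: ordinary concavity of $|f'|^q$ coincides with $s$-concavity at $s=1$, and at $s=1$ the prefactor $2^{(s-1)/q}$ collapses to $1$, so the bound in Theorem \ref{t7} reduces verbatim to the one stated here. A one-line proof is therefore available simply by invoking Theorem \ref{t7} with $s=1$.

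For a self-contained argument I would retrace the proof of Theorem \ref{t7}. The key steps, in order, are: (i) apply Lemma \ref{ll} to rewrite the left hand side as a sum of two weighted integrals of $f'$ over $[0,1]$; (ii) move the modulus inside the sum and then inside each integral; (iii) apply H\"older's inequality with conjugate exponents $p$ and $q$ to each integral, separating the weight $tu+(1-t)v$ (respectively $tv+(1-t)u$) from $|f'(tx+(1-t)v)|$ (respectively $|f'(tx+(1-t)u)|$); (iv) evaluate the weight integrals as $L_p^p(u,v)$, exactly as already computed in display (\ref{2c}); and (v) bound each $\int_0^1 |f'|^q$ integral by $|f'|^q$ evaluated at the midpoint, using the reversed Hermite-Hadamard inequality for concave functions, i.e.\ the $s=1$ case of (\ref{109}) with the inequalities flipped. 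Taking the $q$-th root and combining the two pieces yields the stated bound, with $L_p(u,v)$ appearing from the weight factor and the midpoint values $|f'(\tfrac{x+v}{2})|$ and $|f'(\tfrac{x+u}{2})|$ from step (v).

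The only step requiring any care is step (v): one must remember to reverse the direction of the Hermite-Hadamard inequality because $|f'|^q$ is concave rather than convex, which is precisely what makes $|f'(\text{midpoint})|$ an \emph{upper} bound for the $L^1$-average on $[0,1]$. Apart from that sign check, the argument is entirely parallel to the proof of Theorem \ref{t7} and introduces no new technical obstacle.
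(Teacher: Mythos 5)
Your proposal is correct and matches the paper exactly: the author's own proof offers precisely these two routes, namely specializing Theorem \ref{t7} at $s=1$ (where $2^{(s-1)/q}=1$) or redoing the Lemma \ref{ll} + H\"{o}lder argument with the reversed Hermite--Hadamard inequality for concave functions. Your sign-check remark on step (v) is the only point of care, and you handle it correctly.
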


\begin{proof}
Using Lemma \ref{ll} and using the H\"{o}lder inequality and since $%
\left\vert f^{\prime }\right\vert ^{q}$\ is concave on $\left[ u,v\right] $
and using Hadamard's inequality for concave functions, we complete the
proof. Or, in Theorem \ref{t7}, if we take $s=1,$ then the assertion is also
proved.
\end{proof}

\begin{corollary}
\label{c5}In Theorem \ref{t7},

i) if we take $x=u$ or $x=v,$ we get%
\begin{equation*}
\left\vert \frac{\left( v-u\right) \left( vf\left( v\right) -uf\left(
u\right) \right) }{\left( v-u\right) ^{2}}-\frac{1}{v-u}\int_{u}^{v}f\left(
\mu \right) d\mu \right\vert \leq 2^{\frac{s-1}{q}}L_{p}\left( u,v\right)
\left\vert f^{\prime }\left( \frac{u+v}{2}\right) \right\vert
\end{equation*}%
ii) if we take $x=\frac{u+v}{2},$ we get%
\begin{eqnarray}
&&\left\vert \frac{vf\left( v\right) -uf\left( u\right) }{2\left( v-u\right) 
}+\frac{1}{2}f\left( \frac{u+v}{2}\right) -\frac{1}{v-u}\int_{u}^{v}f\left(
\mu \right) d\mu \right\vert  \label{c51} \\
&\leq &\frac{2^{\frac{s-1}{q}}L_{p}\left( u,v\right) }{4}\left[ \left\vert
f^{\prime }\left( \frac{u+3v}{4}\right) \right\vert +\left\vert f^{\prime
}\left( \frac{v+3u}{4}\right) \right\vert \right]  \notag
\end{eqnarray}
\end{corollary}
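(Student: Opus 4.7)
The plan is to obtain Corollary \ref{c5} as a direct specialization of Theorem \ref{t7} by substituting the relevant values of $x$ into both sides of that theorem and simplifying. No new analytic work is required; everything reduces to careful bookkeeping of the coefficients $(v-x)^{2}$, $(x-u)^{2}$ and the arguments $\tfrac{x+v}{2}$, $\tfrac{x+u}{2}$.

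For part (i), I would substitute $x=u$ into the inequality of Theorem \ref{t7}. The term $(x-u)^{2}=0$ kills the second summand on both sides, while $(v-x)^{2}=(v-u)^{2}$ cancels the factor $(v-u)^{-2}$ on each side, leaving only $\bigl|f^{\prime}\bigl(\tfrac{u+v}{2}\bigr)\bigr|$ in the argument (since $\tfrac{x+v}{2}=\tfrac{u+v}{2}$ when $x=u$). The left-hand side then collapses to $\bigl|\tfrac{vf(v)-uf(u)}{v-u}-\tfrac{1}{v-u}\int_{u}^{v}f(\mu)\,d\mu\bigr|$, which is exactly the stated bound. The case $x=v$ is entirely symmetric: now the first summand vanishes, the second simplifies analogously, and one recovers the same inequality.

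For part (ii), I would set $x=\tfrac{u+v}{2}$, so that $v-x = x-u = \tfrac{v-u}{2}$ and $(v-x)^{2}=(x-u)^{2}=\tfrac{(v-u)^{2}}{4}$. Expanding the numerator of the left-hand side,
\[
(v-x)\bigl(vf(v)-uf(x)\bigr)+(x-u)\bigl(vf(x)-uf(u)\bigr) = \tfrac{v-u}{2}\bigl[vf(v)-uf(u)+(v-u)f(x)\bigr],
\]
and dividing by $(v-u)^{2}$ yields $\tfrac{vf(v)-uf(u)}{2(v-u)}+\tfrac{1}{2}f\bigl(\tfrac{u+v}{2}\bigr)$, matching the claimed left-hand side of \eqref{c51}. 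On the right-hand side, the factor $\tfrac{(v-u)^{2}}{4}$ cancels $(v-u)^{-2}$ to produce the prefactor $\tfrac{2^{(s-1)/q}L_{p}(u,v)}{4}$, and the derivative arguments become $\tfrac{x+v}{2}=\tfrac{u+3v}{4}$ and $\tfrac{x+u}{2}=\tfrac{v+3u}{4}$.

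There is no real obstacle here; the only thing to watch is the algebraic collapse in part (ii) showing that the telescoping combination $vf(v)-uf(x)+vf(x)-uf(u)$ simplifies to $vf(v)-uf(u)+(v-u)f(x)$, which is what converts the one-variable expression into the mean value form appearing in the corollary.
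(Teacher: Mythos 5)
Your proposal is correct and is exactly the argument the paper intends: the corollary is a direct specialization of Theorem \ref{t7}, and your substitutions of $x=u$, $x=v$, and $x=\tfrac{u+v}{2}$ into both sides, together with the observation that $(v-x)(vf(v)-uf(x))+(x-u)(vf(x)-uf(u))$ telescopes to $\tfrac{v-u}{2}\bigl[vf(v)-uf(u)+(v-u)f(x)\bigr]$ at the midpoint, reproduce the stated bounds exactly.
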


\begin{proposition}
Let $u,v\in J^{\circ },\ 0<u<v$ and $s=1,$ then%
\begin{eqnarray*}
&&\left\vert \frac{v\sin v-u\sin u+2\cos v-2\cos u}{v-u}+\sin \left( A\left(
u,v\right) \right) \right\vert \\
&\leq &\frac{L_{p}\left( u,v\right) }{2}\left[ \left\vert \cos \left( \frac{%
u+3v}{4}\right) \right\vert +\left\vert \cos \left( \frac{v+3u}{4}\right)
\right\vert \right]
\end{eqnarray*}
\end{proposition}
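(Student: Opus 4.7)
The plan is to specialize inequality (\ref{c51}) from Corollary \ref{c5}(ii) to the concrete function $f(x)=\sin x$ with $s=1$, and then match the two sides algebraically.

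First I would verify that the hypotheses of Theorem \ref{t7} (from which Corollary \ref{c5} is derived) hold for $f(x)=\sin x$: namely, that $|f'|^q = |\cos x|^q$ is concave on $[u,v]$. This implicitly restricts $u,v$ to an interval on which $\cos$ does not change sign and on which $|\cos|^q$ is concave (for the chosen $q>1$), which I would note as the standing assumption. Setting $s=1$ simplifies the prefactor $2^{(s-1)/q}$ to $1$, so (\ref{c51}) becomes
\begin{equation*}
\left\vert \frac{vf(v)-uf(u)}{2(v-u)}+\frac{1}{2}f\!\left(\frac{u+v}{2}\right)-\frac{1}{v-u}\int_u^v f(\mu)\,d\mu\right\vert
\leq \frac{L_p(u,v)}{4}\Bigl[\,|f'(\tfrac{u+3v}{4})|+|f'(\tfrac{v+3u}{4})|\,\Bigr].
\end{equation*}

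Next I would substitute $f(x)=\sin x$ and compute each piece. Since $f'(x)=\cos x$, the right-hand side is immediately
\begin{equation*}
\frac{L_p(u,v)}{4}\Bigl[\,|\cos(\tfrac{u+3v}{4})|+|\cos(\tfrac{v+3u}{4})|\,\Bigr].
\end{equation*}
For the left-hand side, I have $\frac{vf(v)-uf(u)}{2(v-u)}=\frac{v\sin v-u\sin u}{2(v-u)}$, $\frac{1}{2}f(\frac{u+v}{2})=\frac{1}{2}\sin A(u,v)$, and $\int_u^v \sin\mu\,d\mu=\cos u-\cos v$, so $-\frac{1}{v-u}\int_u^v\sin\mu\,d\mu=\frac{\cos v-\cos u}{v-u}$. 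Combining these three terms under the common denominator $2(v-u)$ gives
\begin{equation*}
\frac{1}{2}\left\vert \frac{v\sin v-u\sin u+2\cos v-2\cos u}{v-u}+\sin A(u,v)\right\vert.
\end{equation*}

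Finally, multiplying both sides of the resulting inequality by $2$ absorbs the factor $\tfrac12$ on the left and changes $\tfrac{L_p(u,v)}{4}$ into $\tfrac{L_p(u,v)}{2}$ on the right, yielding exactly the claimed estimate. No real obstacle arises once the calculation is organized: the only subtle point is the concavity assumption on $|\cos x|^q$, which is needed to legitimately invoke Corollary \ref{c5}(ii); everything else is a direct substitution together with scaling by the factor of $2$.
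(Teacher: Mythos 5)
Your proposal is correct and follows the same route as the paper: apply inequality (\ref{c51}) of Corollary \ref{c5}(ii) with $s=1$ to $f(x)=\sin x$, compute the three terms, and rescale by $2$. You are in fact more careful than the paper, which merely calls $\sin$ ``concave'' on $[0,\pi]$, whereas the actual hypothesis of Theorem \ref{t7} concerns concavity of $\left\vert f'\right\vert^{q}=\left\vert \cos x\right\vert^{q}$ — a condition that fails across $x=\pi/2$ and therefore does restrict the admissible $u,v$, exactly as you flag.
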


\begin{proof}
The proof follows from (\ref{c51}) applied\ to the concave function $f:\left[
0,\pi \right] \rightarrow \left[ 0,1\right] ,$ $f\left( x\right) =\sin x.$
\end{proof}

\end{document}